\makeatletter \@addtoreset{equation}{section} \makeatother
\renewcommand\thetable{\thesection.\@arabic\c@table}
\newtheorem{theorem}{Theorem}[section]
\newtheorem{lemma}[theorem]{Lemma}
\newtheorem{proposition}[theorem]{Proposition}
\newtheorem{definition}{Definition}
\newtheorem{remark}{Remark}[section]
\title[H. L. for a Particle System with degenerate rates]{Hydrodynamic
  Limit for a Particle System \\ with degenerate rates}
\date{\today}
\author{Gonçalves, P.}
\address{IMPA, Estrada Dona Castorina 110, CEP 22460 Rio de Janeiro, Brasil}
\email{patg@impa.br}
\author{Landim, C.}
\address{IMPA, Estrada Dona Castorina 110,
CEP 22460-320 Rio de Janeiro, Brasil\\
CNRS UMR 6085, Universit\'e de Rouen, UMR 6085, Avenue de
l'Universit\'e, BP.12, Technop\^ole du Madrillet, F76801
Saint-\'Etienne-du-Rouvray, France}
\email{landim@impa.br}
\author{Toninelli, C.}
\address{ Laboratoire de Probabilit{\'e}s et Mod{\`e}les
  Al{\`e}atoires CNRS-UMR 7599, Univ.Paris VI-VII, 4 Pl.Jussieu,
Paris, FRANCE} \email{ctoninel@ccr.jussieu.fr}
\begin{document}

\begin{abstract}

  We study the hydrodynamic limit for some conservative particle
  systems with degenerate rates, namely with nearest neighbor
  exchange rates which vanish for certain configurations.  These
  models belong to the class of {\sl kinetically constrained lattice
    gases} (KCLG) which have been introduced and intensively studied
  in physics literature as simple models for the liquid/glass
  transition.  Due to the degeneracy of rates for KCLG there exists
  {\sl blocked configurations} which do not evolve under the dynamics
  and in general the hyperplanes of configurations with a fixed number
  of particles can be decomposed into different irreducible sets.  As
  a consequence, both the Entropy and Relative Entropy method cannot
  be straightforwardly applied to prove the hydrodynamic limit.  In
  particular, some care should be put when proving the One and Two
  block Lemmas which guarantee local convergence to equilibrium.
We show that,
for initial profiles smooth enough and bounded away from zero and one,
the macroscopic density
profile for our KCLG evolves under the diffusive time scaling
according to the porous medium equation.
Then we prove the same result for more general profiles
for a slightly perturbed dynamics
obtained by adding jumps of the Symmetric Simple
Exclusion.
The role of the latter is to remove the degeneracy of rates and at the same time they
are properly slowed down in order not to change the macroscopic behavior.
The equilibrium fluctuations and the magnitude of the spectral gap for
this perturbed model are also obtained.
\end{abstract}
\subjclass{60K35}
\renewcommand{\subjclassname}{\textup{2000} Mathematics Subject Classification}
\keywords{Hydrodynamic Limit, Porous medium equation, Spectral Gap, Degenerate Rates}
\begin{thanks} {The first author wants to express her gratitude to
    F.C.T. (Portugal) for supporting her Phd with the grant /SFRH/ BD/ 11406/ 2002. The third author thanks IMPA for hospitality and
CNPq for supporting this work via the grant Pronex E-26/151.943/2004}
\end{thanks}
\maketitle
\section{Introduction}

The purpose of this article is to define a conservative interacting particle system whose macroscopic density profile evolves according to the
\textbf{porous medium equation}, namely the partial differential equation given by
\begin{equation}
\begin{cases}
\partial_{t}\rho(t,u)=\Delta\rho^{m}(t,u) \\ \label{eq:porous m}
\rho(0,\cdot) = \rho_0(\cdot)
\end{cases}
\end{equation}
where $\Delta=\sum_{1\leq{j}\leq{d}}\partial_{u_{j}}^{2}$ and $m\in{\mathbb{N}\setminus\{1\}}$. This can be rewritten in the divergence form as
$\partial_{t}\rho(t,u)=\nabla(D(\rho(t,u))\nabla(\rho(t,u)))$ with diffusion coefficient $D(\rho(t,u))=m\rho^{m-1}(t,u)$. Note that $D(\rho)$ goes to
zero as $\rho\rightarrow{0}$, thus the equation
looses its parabolic character.

One of the most important properties  of the porous medium equation is that its solutions can be compactly supported at each fixed time or, in
physical terms, that is has a finite speed of propagation. This is in strong contrast with the solutions of the classical heat equation. A
non negative solution of the heat equation is always positive on its domain. A second observation is that the solutions of the equation
(\ref{eq:porous m}) can be continuous on the domain of definition without being smooth at the boundary. The existence of this kind of solutions
is a direct consequence of the degeneracy of $D(\rho)$ as $\rho\to 0$. For a reference on the mathematical properties of the porous medium equation we refer
to \cite{V.} and references therein. We also recall that such equation is relevant in different contexts in physical literature beyond the original motivation of describing the density of an ideal gas flowing isothermally through an homogeneous porous medium (corresponding to the choice $m=2$).

A microscopic derivation of the porous medium equation has been already obtained in \cite{F.I.S.} by considering a model in which the occupation
number is a continuous variable. Here we study instead models with discrete occupation variables: our microscopic dynamics are given by
stochastic lattice gases with hard core exclusion, namely systems of interacting particles on the d-dimensional discrete torus
${\mathbb{T}_{N}^d}$ with the constraint that on each site there can be at most one particle.  A configuration is therefore defined by giving
for each site $x\in\mathbb{T}_{N}^d$ the occupation variable, $\eta(x)\in\{0,1\}$, which stands for empty or occupied sites, respectively.
Evolution is then given by a continuous time Markov process during which the jump of a particle from a site $x$ to a nearest neighbor site $y$
occurs at rate $c(x,y,\eta)$. The choice $c(x,y,\eta)=1$ corresponds to the Symmetric Simple Exclusion process (SSEP) and, as is very well
known, leads to the heat equation under diffusive re-scaling of time, namely $D(\rho)=1$. In order to slow down the low density dynamics and
obtain a diffusion coefficient which degenerates for $\rho\to 0$, we impose a local constraint (in addition to hard core exclusion) that must be
satisfied in order for a particle jump to be allowed.  This constraint is imposed at the level of rates: for any nearest neighbor couple $(x,y)$
we fix the exchange rate $c(x,y,\eta)$ to be zero if $\eta$ does not satisfy a local
 constraint. The latter corresponds to
requiring a minimal number of occupied sites in a proper
neighborhood of $(x,y)$. Since the typical number
of particles in a given region is monotone with $\rho$, $D(\rho)$
will decrease as $\rho$ is decreased.
At the same time the rates are chosen in order to
satisfy the detailed balance condition with respect to Bernoulli
product measure at any density (as for SSEP), namely the constraints do
not add further interactions beyond hard core exclusion.
The models we introduce belong to the class of {\sl kinetically constrained lattice gases} (KCLG), which have been introduced and analyzed in
physical literature since the late 1980's to model liquid/glass and more general jamming transitions (see for a review \cite{R.S., Bi.T.} and
references therein). In this context, the constraints are devised to mimic the fact that the motion of a particle in a dense medium (e.g. a molecule
 in a low temperature liquid) can be inhibited by the geometrical constraints induced by the surrounding particles \footnote{Note that the role of
 particles and vacancy is usually
exchanged in physical literature with respect to our convention: vacancies rather then particles are needed to facilitate motion. With this
notation the diffusion coefficient degenerates at high rather than low density.}. For most KCLG a degenerate diffusion coefficient is expected
when $\rho\to 0$ but, for very restrictive choices of the
constraints, the degeneracy could even occur at non trivial critical density.

Here we provide the first derivation of the hydrodynamic limit for the simplest  KCLG, the so called {\sl non cooperative} KCLG. This means that
the rates are such that it is possible to construct a proper finite group of particles, the {\sl mobile cluster},  which has the following
properties. There exists (at least one) allowed   (i.e. with strictly positive rates) sequence of nearest neighbor jumps which allows to shift
the mobile cluster to any other position. Furthermore, this allowed path should be deterministic, i.e. independent on the value of the
occupation variables on the remaining sites. Finally, the jump of any other particle to a neighboring site should be allowed when the mobile
cluster is brought in a proper position in its vicinity. Therefore a configuration containing the mobile cluster can be connected to any other
one with the same property by an allowed path. This, very loosely speaking, means that non-cooperative KCLG should behave like a re-scaled SSEP
with the mobile clusters playing the role of single particles for SSEP. Thus their diffusion coefficient should degenerate only for $\rho\to 0$,
when the density of mobile clusters goes to zero as power law with $\rho$. Indeed, in our proof of the latter result, the non cooperative
property will play a key role since we will use it to provide paths which allow to perform particle exchanges. The use of similar path arguments
for KCLG had already been exploited in \cite{B.T.}, were the scaling with the lattice size of the spectral gap and log Sobolev constant for non
cooperative models in contact with particle reservoirs at the boundary were derived. A similar case in which the diffusion coefficient does not
vanish has already been studied in \cite{HS.}. Finally, we stress that all along our proofs of hydrodynamics (both for the Entropy and Relative
Entropy method) we use an additional property of the rates: the fact that they are of gradient type. A natural development of our work would be
to generalize the present results to non cooperative KCLG with non gradient rates.


An outline of the paper follows. In section \ref{results} we introduce some notation, define our models and state the main results. The models
we consider are either truly KCLG or perturbed models in which we add proper jumps of SSEP to remove the degeneracy of the exchange rates. In
section \ref{relativeentropy} the proof of the hydrodynamic limit via the Relative Entropy method for the KCLG is presented. The proof of the
hydrodynamic limit for the perturbed models via the Entropy method is described in section \ref{entropy}. The proof of a Replacement Lemma
needed for the Entropy method is postponed to the section \ref{replacement}. Finally, in section \ref{gap}, we study the spectral gap for both
the unperturbed and perturbed dynamics.

\section{Statement of results}
\label{results}

The models we consider are continuous time Markov processes $\eta_{t}$ with space state $\chi^N_{d}=\{0,1\}^{\mathbb{T}_{N}^d}$, where
${\mathbb{T}_{N}^d}=\{0,1,..,N-1\}^d$ is the discrete d-dimensional torus. Let $\eta$ denote a configuration
 in $\chi_{d}^N$, $x$ a site in $\mathbb{T}_{N}^d$ and $\eta(x)=1$ if there is a particle at site $x$, otherwise $\eta(x)=0$.
The elementary moves which occur during evolution correspond to jump of particles among nearest neighbors, $x$ and $y$, occurring at a rate
$c(x,y,\eta)$ which depends both on the couple $(x,y)$ and on the value of the configuration $\eta$ in a finite neighborhood of $x$ and $y$.
Furthermore these rates are symmetric with respect of an $x$-$y$ exchange $c(x,y,\eta)=c(y,x,\eta)$ and are translation invariant. More
precisely the dynamics is defined by means of an infinitesimal generator acting on local functions $f:\chi_{d}^N\rightarrow{\mathbb{R}}$ as
\begin{equation}\label{eq:lpgenerator}
(\mathcal{L}_{P}f)(\eta)=\sum_{\substack{x,y\in{\mathbb{T}_{N}^d}\\|x-y|=1}}c(x,y,\eta)\eta(x)(1-\eta(y))(f(\eta^{x,y})-f(\eta)),
\end{equation}
where  $|x-y|=\sum_{1\leq{i\leq{d}}}|x_{i}-y_{i}|$ is the sum norm in $\mathbb{R}^d$ and
\begin{eqnarray}
 \eta^{x,y}(z)=\left\{
\begin{array}{rl}
\eta(z), & \mbox{if $z\neq{x,y}$}\\ \label{etaxy}
\eta(y), & \mbox{if $z=x$}\\
\eta(x), & \mbox{if $z=y$}
\end{array}.
\right.
\end{eqnarray}

In the sequel we consider the rates
\begin{equation}
c(x,x+e_{j},\eta)=\eta(x-e_{j})+\eta(x+2e_{j}) \label{rates}
\end{equation}
where $\{e_{j},j=1,..,d\}$ denotes the canonical basis of $\mathbb{R}^d$ and we will prove all the theorems for this choice. This, as we will prove,
 leads in the hydrodynamic limit to the porous medium equation
(\ref{eq:porous m}) for $m=2$.  Also, we can provide for any other $m$
 a proper choice of the
rates such that all proofs can be readily extended leading in the diffusive re-scaling to the porous medium equation with the correspondent $m$.
For instance in the case $m=3$, the jump rates to be considered are
\begin{equation*}
c(x,x+e_{j},\eta)=\eta(x-e_{j})\eta(x+2e_{j})+\eta(x-2e_{j})\eta(x-e_{j})+\eta(x+2e_{j})\eta(x+3e_{j}).
\end{equation*}
Note that both the choices of the jump rates taken above have the property of defining a \textbf{gradient system}, namely one for which the
instantaneous current between the sites $0$ and $e_{j}$:
\begin{equation*}
W_{0,e_{j}}(\eta)=c(0,e_{j},\eta)\{\eta(0)(1-\eta(e_{j}))-\eta(e_{j})(1-\eta(0))\}
\end{equation*}
can be rewritten as a function minus its translation. This property will be a key ingredient when deriving the hydrodynamic limit. Also, both
the models are non cooperative in any dimension according to the definition given in introduction. Consider for example the rates (\ref{rates})
in one dimension. A possible choice for the mobile cluster is given by two particles at distance at most two. Let us describe  the deterministic
sequence of allowed moves (i.e. with strictly positive exchange rate) which we should perform  to shift of one step to the right the mobile
cluster when $\eta(x)=\eta(x+e_1)=1$, i.e. to transform $\eta$ into $\eta'$ with $\eta'(x+e_1)=\eta'(x+2e_1)=1$, $\eta'(x)=\eta(x+2e_1)$ and
$\eta'(z)=\eta(z)$ for $z\not\in(x,x+e_1,x+2e_1)$. First we make the move $\eta\to\eta^{x+e_1,x+2e_1}$, which is allowed since
$c(x+e_1,x+2e_1,\eta)\geq \eta(x)=1$. Then we perform the move $\eta^{x+e_1,x+2e_1}\to(\eta^{x+e_1,x+2e_1})^{x,x+e_1}$ which is also allowed,
since $c(x,x+e_1,\eta^{x+e_1,x+2e_1})\geq \eta^{x+e_1,x+2e_1}(x+2e_1)=\eta(x+e_1)=1$. The case in which we have instead the particles at
distance two, $\eta(x)=\eta(x+2e_1)=1$, can be treated analogously. The second property which characterizes non cooperative models can also be
readily checked: if we are given any two neighboring sites, $(y,y+e_1)$, the exchange of their occupation variables can be performed if we bring
the mobile group of two particles in $(y-2e_1,y-e_1)$ since $c(y,y+e_1,\eta)\geq \eta(y-e_1)$. It is then possible to verify  that any two
configurations $\eta$ and $\eta'$ with the same number of particles, $\sum\eta(x)=\sum\eta'(x)$ and both containing at least two particles at
distance at most two can be connected one to another  via a sequence of allowed jumps.

Let $\nu_{\alpha}$ be the Bernoulli product measure in $\chi^N_{d}$, with
 $\alpha \in{(0,1)}$.  Since $c(x,y,\eta)=c(y,x,\eta)$ $\forall{x,y\in{\mathbb{T}^d_{N}}}$, the measures $\nu_{\alpha}$ are
reversible for this process $\forall \alpha$, as for SSEP. By the degeneracy of the rates, other invariant measures arise naturally. For example in the one
dimensional setting, any configuration $\eta$  such that the distance
between the position of two consecutive occupied sites is bigger than two
has all the exchange rates which vanish. Therefore it is a  {\sl blocked configuration} and a Dirac measure supported on it is
an invariant measure for this process.

Let $\Sigma_{N,k}$ denote the hyperplane of configurations with $k$ particles, namely
\begin{equation} \label{hyperplane}
\Sigma_{N,k}=\{\eta\in{\chi_d^N}:\sum_{x\in{\mathbb{T}_{N}^d}}\eta(x)=k\},
\end{equation}
which is invariant under the dynamics. We say that $\mathcal{O}$ is an irreducible component of $\Sigma_{N,k}$ if for every $\eta$,
$\xi\in{\mathcal{O}}$ it is possible to go from $\eta$ to $\xi$ by a sequence of allowed jumps.
For SSEP, the hyperplanes $\Sigma_{N,K}$ are irreducible components for any choice of $k$ and $N$. In the presence of constraints, a more complicated decomposition in general arises. For example in $d=1$ with the rates (\ref{rates}), the above observation  on blocked configurations and on the non cooperative character  of the model, leads to the following irreducible decomposition for the hyperplanes. If $k>N/3$, $\Sigma_{N,K}$ is irreducible. Instead, if $k\leq N/3$, $\Sigma_{N,k}$ is reducible and decomposable into the irreducible component which contains all configurations with at least one couple of particles at distance at  most two plus many irreducible sets each containing only a blocked configuration

The irreducible decomposition in dimension $d>1$ is more complicated. In this case the model
is still non cooperative and a possible mobile cluster is given by a d-dimensional hypercube of particles of
linear size $2$. For any choice of the spatial dimension $d$, it is possible to identify a constant
 $C(d)<\infty$ such that the hyperplane $\Sigma_{N,k}$ is irreducible for $k> C(d) (N/3)^d$, while
  it is reducible in several components for $k\leq C(d)(N/3)^d$. In this case we have: (i) the irreducible
   component which contain
configurations with at least one d-dimensional hypercube of particles of linear size two plus all configurations that can be connected to these;
(ii) irreducible components which contain single (blocked) configuration; (iii) other irreducible components which contain neither blocked
configurations nor any configuration belonging to (i). An example of irreducible set of the third kind for the rates (\ref{rates}) in $d=2$ is
for example the one that contains all configurations which have two particles at distance smaller or equal to two on a given line, for
$x=(x_{1},x_{2})$ such that $\eta(x+e_i)=\eta(x)=1$ or  $\eta(x+2 e_i)=\eta(x)=1$ and are completely empty $\forall$ $y=(y_{1},y_{2})$ which do
not belong to the same line, namely $y_2\neq x_2$.



In order to investigate the hydrodynamic limit, we need to settle some notation. Define the \textbf{empirical measure} by:
\begin{equation} \label{eq:empirical}
\pi_{t}^{N}(du)=\pi^{N}(\eta_{t},du)=\frac{1}{N^d}\sum_{x\in{\mathbb{T}_{N}^d}}\eta_{tN^2}(x)\delta_{\frac{x}{N}}(du),
\end{equation}
where $\delta_{u}$ denotes the Dirac measure at $u$.

Let $\mathbb{T}^d$ denote the d-dimensional torus. Fix now, a initial profile $\rho_{0}:\mathbb{T}^d\rightarrow{[0,1]}$ and denote by
$(\mu^{N})_{N}$ a sequence of probability measures on $\chi_{d}^N$.

\begin{definition}
A sequence \textbf{$(\mu^{N})_{N}$ is associated to $\rho_{0}$}, if for every continuous function $H:{\mathbb{T}^d}\rightarrow{\mathbb{R}}$ and
for every $\delta>0$
\begin{equation} \label{measureassociatedtoprofile}
\lim_{N\rightarrow{+\infty}}\mu^{N}\Big[\eta:\Big|\frac{1}{N}\sum_{x\in{\mathbb{T}_{N}^d}}H\Big(\frac{x}{N}\Big)\eta(x)-\int_{\mathbb{T}^d}H(u)\rho_{0}(u)du\Big|>\delta\Big]=0.
\end{equation}
\end{definition}

Our goal consists in showing that, if at time $t=0$ the empirical measures are associated to some initial profile $\rho_{0}$, at the
macroscopic time $t$ they are associated to a profile $\rho_{t}$ which is  the solution of the hydrodynamic equation (\ref{eq:porous
m}).

Fix $\epsilon>0$ and let $\rho_{0}:\mathbb{T}^d\rightarrow{[0,1]}$ be a profile of class $C^{2+\epsilon}(\mathbb{T}^d)$. By Theorem A2.4.1 of
\cite{K.L.}, equation (\ref{eq:porous m}) admits a solution that we denote by $\rho(t,\cdot)$ which is of class
$C^{1+\epsilon,2+\epsilon}(\mathbb{R}_{+}\times{\mathbb{T}^d})$.

Here we also have to impose a bound condition on the initial profile, as the existence of a strictly positive constant $\delta_{0}$ such that
\begin{equation} \label{boundcondinitialprofile}
\forall{u\in{\mathbb{T}^d}}, \\ \quad\  \delta_{0}\leq{\rho_{0}(u)}\leq{1-\delta_{0}}.
\end{equation}

Let $\nu^N_{\rho_{0}(\cdot)}$ be the product measure in $\chi_{d}^N$ such that:
\begin{equation*}
\nu^N_{\rho_{0}(\cdot)}\{\eta,\eta(x)=1\}=\rho_{0}(x/N).
\end{equation*}

For two measures $\mu$ and $\nu$ in $\chi_{d}^N$ denote by $H(\mu/\nu)$ the \textbf{relative entropy} of $\mu$ with respect to $\nu$, defined
by:
\begin{equation} \label{df:entropy}
H(\mu/\nu)=\sup_{f}\Big\{\int fd\mu-\log\int e^{f} d\nu\Big\},
\end{equation}
where the supreme is carried over all continuous functions.
\begin{theorem} \label{th:hlrem}
Let $\rho_{0}:\mathbb{T}^d\rightarrow{[0,1]}$ be a initial profile of class $C^{2+\epsilon}(\mathbb{T}^d)$ that satisfies the bound condition
(\ref{boundcondinitialprofile}). Let $(\mu^{N})_{N}$ be a sequence of probability measures on $\chi_{d}^N$ such that:
\begin{equation} \label{entropymunurho0}
H(\mu^{N}/\nu^N_{\rho_{0}(.)})=o(N^d).
\end{equation}
 Then, for each $t\geq{0}$
\begin{equation*}
\pi^{N}_{t}(du)\xrightarrow[N\rightarrow{+\infty}]\,\rho(t,u)du
\end{equation*}
in probability, where $\rho(t,u)$ is a smooth solution of equation (\ref{eq:porous m}).
\end{theorem}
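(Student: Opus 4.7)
The plan is to use Yau's relative entropy method with a time-dependent reference measure. Define $f^N_t$ to be the density of the law $\mu^N_t$ of the process at time $tN^2$ with respect to a convenient reference measure, and let $\psi^N_t$ be the density of the slowly varying product measure $\nu^N_{\rho(t,\cdot)}$ with respect to a fixed Bernoulli measure $\nu_\alpha$. Setting $H_N(t)=H(\mu^N_t/\nu^N_{\rho(t,\cdot)})$, Yau's inequality gives
\begin{equation*}
\frac{d}{dt}H_N(t)\;\le\;\int\frac{1}{\psi^N_t}\bigl(N^2\mathcal{L}^{*}_P-\partial_t\bigr)\psi^N_t\,d\mu^N_t,
\end{equation*}
so the task reduces to showing that the right-hand side is $o(N^d)+C H_N(t)$ uniformly on $[0,T]$; since $H_N(0)=o(N^d)$ by hypothesis (\ref{entropymunurho0}) and standard perturbation estimates, Gronwall then yields $H_N(t)=o(N^d)$ and the entropy inequality converts this into the convergence of $\pi^N_t$ to $\rho(t,u)\,du$ in probability.

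The next step is to expand the integrand using the gradient structure of the model. A direct computation of $N^2\mathcal{L}_P^{*}\log\psi^N_t$ produces, for each bond $(x,x+e_j)$, a factor $c(x,x+e_j,\eta)[\eta(x)-\eta(x+e_j)]$ times a discrete gradient of $\log\{\rho(t,x/N)/(1-\rho(t,x/N))\}$. Because $c(x,x+e_j,\eta)=\eta(x-e_j)+\eta(x+2e_j)$, the instantaneous current $W_{0,e_j}(\eta)$ is a discrete gradient, and a summation by parts (both on the microscopic and mesoscopic level) yields terms involving the discrete Laplacian of $\rho(t,\cdot)$ multiplied by local functions of $\eta$. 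Taylor expanding and using the smoothness of $\rho(t,\cdot)$, one isolates a leading term of the form $\sum_x \Delta\rho(t,x/N)\,\tau_x h(\eta)$, where $\tau_x h$ is the translate of a local function whose mean under $\nu_\beta$ equals $\beta^2$ (for our $m=2$ choice of rates). Combined with the Kolmogorov equation $\partial_t\rho=\Delta\rho^2$ entering through $\partial_t\log\psi^N_t$, the leading contributions cancel provided one can replace $\tau_x h(\eta)$ by its equilibrium value at the local density, i.e. prove a one-block replacement lemma.

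The main obstacle, and the feature genuinely specific to the degenerate model, is precisely this replacement step: the usual one-block lemma relies on local ergodicity, whereas the hyperplanes $\Sigma_{N,k}$ decompose into many irreducible components and blocked configurations persist. My plan for circumventing it rests on the hypothesis (\ref{boundcondinitialprofile}), which gives $\delta_0\le\rho(t,u)\le 1-\delta_0$ for all times by the maximum principle for the porous medium equation. Under $\nu^N_{\rho(t,\cdot)}$ the probability that a mesoscopic box of size $\ell$ contains no mobile cluster (no pair of particles at distance $\le 2$, in $d=1$) is exponentially small in $\ell$; by the entropy inequality $\mu^N_t(A)\le(\log 2+H_N(t))/\log(1/\nu^N_{\rho(t,\cdot)}(A))$, the same bound holds up to $o(1)$ under $\mu^N_t$. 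On the complementary "good" set the canonical restriction of the dynamics to the box is irreducible thanks to the non-cooperative property proved in Section 2, and a spectral-gap estimate of the type established in \cite{B.T.} (polynomial in $\ell$, uniform in density bounded away from $0$ and $1$) allows the standard Varadhan-type one-block argument to go through on this good set. The error from restricting to the good set is absorbed into $o(N^d)$ because of the exponential control above.

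Finally, once the one-block replacement is in place, the two-block lemma is obtained by a similar combination of exponential control of "bad" events and the mobile-cluster path argument, and the remaining terms in Yau's inequality are bounded by $CH_N(t)+o(N^d)$ by elementary $L^\infty$ estimates on $\rho$ and its derivatives. Gronwall's lemma then closes the loop, yielding $H_N(t)=o(N^d)$ and hence the statement of Theorem \ref{th:hlrem}. The genuinely hard part is the one-block lemma in the degenerate setting; everything else is a careful adaptation of the machinery in Chapter 6 of \cite{K.L.} to the present gradient model with $m=2$.
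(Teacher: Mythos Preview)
Your approach is essentially the same as the paper's: Yau's relative entropy inequality, the gradient computation producing the local functions $h_j$ and $g_j$, and a one-block estimate in which the integral is split according to whether the box of size $\ell$ belongs to the irreducible component containing a mobile cluster. On the good component one runs the standard one-block argument, and on the complement one uses the entropy inequality together with the exponential bound $\nu^N_{\rho(s,\cdot)}(\mathcal{E}_{x,\ell}^c)\le (1-\delta_0^{2^d})^{\ell^d}$ coming from the hypothesis (\ref{boundcondinitialprofile}).

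Two small corrections. First, the relative entropy method does \emph{not} require a two-block lemma; this is precisely what distinguishes it from the entropy method of \cite{G.P.V.}. After the one-block replacement the integrand becomes a sum of terms of the form $\tilde h(\eta^\ell(x))-\tilde h(\rho(t,x/N))-\tilde h'(\rho(t,x/N))[\eta^\ell(x)-\rho(t,x/N)]$, which is quadratic in $\eta^\ell(x)-\rho(t,x/N)$ and is bounded by $\gamma^{-1}H_N(t)+o(N^d)$ directly via the entropy inequality and a large-deviation estimate for i.i.d.\ variables under $\nu^N_{\rho(t,\cdot)}$ (this is the standard step in Chapter~6 of \cite{K.L.}). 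Your proposed two-block argument is unnecessary here and would in fact be harder to carry out for the degenerate dynamics. Second, on the good component you do not need a quantitative spectral-gap input from \cite{B.T.}: since the localized Dirichlet form vanishes in the limit $N\to\infty$ for fixed $\ell$, mere ergodicity on that irreducible component suffices to conclude, exactly as in the classical one-block proof.
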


The porous medium equation (\ref{eq:porous m}) presents an interesting behavior for profiles which vanish in some region of their domain. The
previous method does not give us information in this situation since in order to apply it the initial profile has to satisfy condition
(\ref{boundcondinitialprofile}).
When these conditions are not verified, another possibility to derive the hydrodynamic limit is to use the Entropy
method which is due to Guo, Papanicolau and Varadhan \cite{G.P.V.}. However the latter strongly relies on the ergodicity
of the Markov process restricted to an hyperplane which, as we discussed above, does not hold for the process generated by ${\mathcal{L}}_P$.
One way to overcome this problem is to perturb slightly the dynamics in such a way that the frozen
states are destroyed, ergodicity is restored  and the macroscopic hydrodynamic behavior still evolves according to (\ref{eq:porous m}).

More precisely we consider a Markov process with generator given by
\begin{equation} \label{eq:sumgenerator}
\mathcal{L}^N_{\theta}=\mathcal{L}_{P}+N^{\theta-2}\mathcal{L}_{S}
\end{equation}
where $0<\theta<2$, $\mathcal{L}_{P}$ was introduced in (\ref{eq:lpgenerator}) and $\mathcal{L}_{S}$ is the generator of the SSEP, which acts on
local functions $f:\chi_{d}^N\rightarrow{\mathbb{R}}$ as
\begin{equation*}
(\mathcal{L}_{S}f)(\eta)=\sum_{\substack{x,y\in{\mathbb{T}_{N}^d}\\|x-y|=1}}\frac{1}{2d}\eta(x)(1-\eta(y))(f(\eta^{x,y})-f(\eta)),
\end{equation*}
where $\eta^{x,y}$ was defined in (\ref{etaxy}). Since the Bernoulli product measures $\nu_{\alpha}$ are invariant for both the
processes generated by $\mathcal{L}_{P}$ and by $\mathcal{L}_{S}$, they are also invariant for the perturbed process generated by $\mathcal{L}_{\theta}^{N}$.
Thanks to the restriction $0<\theta<2$, the generator of SSEP is slowed down and the Markov process generated by $\mathcal {L}_P$ and $\mathcal{L}_\theta^N$ will have the same hydrodynamic limit. Furthermore, since on any configuration
each nearest neighbour
exchange rate is strictly positive, the Markov process generated by $\mathcal {L}_{\theta}^N$ is ergodic on each hyperplane with fixed particle number and we can apply the Entropy method to derive its hydrodynamic limit.

For a probability measure $\mu$ on $\chi_d^N$, denote by $\mathbb{P}_{\mu}^{\theta,N}=\mathbb{P}_{\mu}$ the probability measure on the space
$D([0,T],\chi_d^N)$, induced by the Markov process with generator $\mathcal{L}_{\theta}^N$ speeded up by $N^2$ and with initial measure $\mu$;
and by $\mathbb{E}_{\mu}$ the expectation with respect to $\mathbb{P}_{\mu}$.

We start by introducing the definition of weak solutions of equation (\ref{eq:porous m}).
\begin{definition}
Fix a bounded profile $\rho_{0}:\mathbb{T}^{d}\rightarrow{\mathbb{R}}$. A bounded function
$\rho:[0,T]\times\mathbb{T}^{d}\rightarrow{\mathbb{R}}$ is a \textbf{weak solution} of equation (\ref{eq:porous m}) if for every function
$H:[0,T]\times\mathbb{T}^{d}\rightarrow{\mathbb{R}}$ of class $C^{1,2}([0,T]\times\mathbb{T}^{d})$

\begin{equation*}
\int_{0}^{T}dt\int_{\mathbb{T}^{d}}du\Big\{\rho(t,u)\partial_{t}H(t,u)+ (\rho(t,u))^2\sum_{1\leq{i}\leq{d}}\partial_{u_{i}}^{2}H(t,u)\Big\}
\end{equation*}
\begin{equation}
+\int_{\mathbb{T}^d}\rho_{0}(u)H(0,u)du=\int_{\mathbb{T}^d}\rho(T,u)H(T,u)du. \label{definitionweaksolution}
\end{equation}
\end{definition}

The Entropy method requires the uniqueness of a weak solution of the hydrodynamic equation. This is a consequence of Theorem A2.4.4 of
\cite{K.L.} together with the fact that there is no more than a particle per site.

\begin{theorem} \label{th:hlrm}
Let $\rho_{0}:\mathbb{T}^d\rightarrow{[0,1]}$ and $(\mu^{N})_{N}$ be a sequence of probability measures on $\chi_{d}^N$ associated to the
profile $\rho_{0}$. Then, for every $t\geq{0}$, for every continuous function $H:\mathbb{T}^d\rightarrow{\mathbb{R}}$ and for every $\delta>0$,
\begin{equation*}
\lim_{N\rightarrow{+\infty}}\mathbb{P}_{\mu^{N}}\Big[\Big|\frac{1}{N^d}\sum_{x\in{\mathbb{T}_{N}^d}}{H\Big(\frac{x}{N}\Big)\eta_{t}(x)}
-\int_{\mathbb{T}^d}{H(u)\rho(t,u)du}\Big|>{\delta}\Big]=0,
\end{equation*}
where $\rho(t,u)$ is the unique weak solution of equation (\ref{eq:porous m}).
\end{theorem}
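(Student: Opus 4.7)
The plan is to follow the standard Entropy method of Guo--Papanicolaou--Varadhan, whose applicability is precisely the reason the perturbation $N^{\theta-2}\mathcal{L}_S$ was introduced: it makes the dynamics irreducible on each hyperplane $\Sigma_{N,k}$ while being slow enough ($\theta<2$) not to modify the macroscopic equation. First, I would let $Q^N$ denote the law on $D([0,T],\mathcal{M}_+(\mathbb{T}^d))$ of the trajectory $t\mapsto \pi^N_t$ under $\mathbb{P}_{\mu^N}$ and show tightness by Aldous' criterion: this reduces to controlling $\langle \pi^N_t,H\rangle$ for smooth $H$, which via Dynkin's formula produces a martingale $M^{N,H}_t$ whose quadratic variation is computed from both $\mathcal{L}_P$ and $N^{\theta-2}\mathcal{L}_S$; both contributions are of order $N^{-d}$ (times $N^{\theta}/N^2$ for the SSEP piece), hence vanish, so the martingale term becomes negligible.

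Next, I would characterise any limit point $Q^*$ of $\{Q^N\}$. Applying the generator to $\langle\pi^N,H\rangle$ and using the \emph{gradient} structure of the rates \eqref{rates}, the contribution of $N^2\mathcal{L}_P$ can be rewritten, after two discrete integrations by parts, in the form
\begin{equation*}
\frac{1}{N^d}\sum_{x\in\mathbb{T}_N^d}(\Delta_N H)(x/N)\,\eta_{tN^2}(x)\eta_{tN^2}(x+e_j),
\end{equation*}
while $N^2\cdot N^{\theta-2}\mathcal{L}_S\langle\pi^N,H\rangle$ is $O(N^{\theta-2})\to 0$. To close this expression in terms of the empirical density alone, I would invoke the Replacement Lemma of Section \ref{replacement}, replacing $\eta(x)\eta(x+e_j)$ by $(\eta^{\varepsilon N}(x))^2$, where $\eta^{\ell}(x)$ is the empirical density in a box of side $\ell$ around $x$, with an error that vanishes in $L^1(\mathbb{P}_{\mu^N})$ as $N\to\infty$ and then $\varepsilon\to 0$. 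Combining this with the usual estimate $H(\mu^N\mid\nu_{1/2}^N)\le CN^d$, the entropy inequality, and the bound $\sup_N \mathbb{E}_{\mu^N}[\langle\pi^N_0,1\rangle]<\infty$ allows one to pass to the limit and conclude that $Q^*$ is concentrated on paths $t\mapsto\rho(t,u)du$ with $\rho$ satisfying the weak formulation \eqref{definitionweaksolution} of the porous medium equation for $m=2$; absolute continuity of the limit follows from $0\le\eta(x)\le 1$. Uniqueness of weak solutions (Theorem A2.4.4 of \cite{K.L.}) then forces $Q^*$ to be the Dirac mass on the unique weak solution, which upgrades weak convergence to the convergence in probability stated in the theorem.

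The main obstacle is of course the Replacement Lemma, which in our degenerate setting cannot be proved by quoting the usual One and Two Block Lemmas directly: the unperturbed generator $\mathcal{L}_P$ admits blocked configurations, and its Dirichlet form does not control the local variance of $\eta(x)\eta(x+e_j)$ uniformly in the initial condition. The whole point of the $N^{\theta-2}\mathcal{L}_S$ term is that the full Dirichlet form $\langle f,(-\mathcal{L}_\theta^N) f\rangle_{\nu_\alpha}$ contains a (small but positive) SSEP piece that does produce irreducibility on each hyperplane; the delicate point is to quantify the corresponding spectral gap well enough that, after the Feynman--Kac bound in the GPV scheme, the resulting error $N^{2-\theta}\times(\text{spectral gap})^{-1}$ on boxes of size $\varepsilon N$ still goes to zero. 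I expect this to be the most involved step and to require both the non-cooperative moving-cluster path argument from \cite{B.T.} and the slowing scale $\theta<2$ in a quantitative way; its detailed execution is postponed to Section \ref{replacement}, and granting it the rest of the argument proceeds exactly as in the classical GPV framework.
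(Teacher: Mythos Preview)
Your high-level outline --- tightness via Aldous, martingale with vanishing quadratic variation, gradient rewriting of the drift, closing via a Replacement Lemma, and then uniqueness of weak solutions --- is exactly the route the paper takes in Section~\ref{entropy}, so the skeleton is correct.

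Where your sketch diverges from the paper is in the mechanism you anticipate for the Replacement Lemma. You frame it as a spectral-gap question on boxes of size $\varepsilon N$ after a Feynman--Kac step. The paper explicitly does \emph{not} use any spectral-gap estimate for the hydrodynamic limit (it says so at the opening of Section~\ref{gap}). Instead, the argument rests on the elementary decomposition
\[
D_\theta(f,\nu_\alpha) \;=\; D_P(f,\nu_\alpha) + N^{\theta-2} D_S(f,\nu_\alpha),
\]
so that the \emph{a priori} bound $D_\theta(\bar f_T^N,\nu_\alpha)\le CN^{d-2}$ yields simultaneously $D_S\le CN^{d-\theta}$ and $D_P\le CN^{d-2}$. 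The One-Block estimate (Lemma~\ref{th:oneblock}) then follows verbatim from the standard SSEP One-Block with the weaker bound $D_S\le CN^{d-\theta}$ (still sufficient since $\theta>0$), with no reference to $\mathcal L_P$ at all. For the Two-Block estimate (Lemma~\ref{th:twoblocks}), after a cutoff of configurations with fewer than $2^d+1$ particles in the box of radius $l$ (trivially handled since the local density is then $O(l^{-d})$), the paper constructs the coupling between the two boxes by the path argument of Lemma~\ref{pathlemma}: SSEP moves of total length $O(l)$, controlled by the per-bond bound $I^S_{x,x+1}\le CN^{-\theta}$, assemble a $2^d$-particle mobile cluster inside $\wedge_l(0)$; then $\mathcal L_P$ moves of total length $O(\varepsilon N)$, controlled by $I^P_{x,x+1}\le CN^{-2}$, transport it across to the other box and back. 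The two pieces combine to $D_\theta^{2,l}(\bar f_{y,l},\nu_\alpha^{2,l})\le C(d,l)\varepsilon^\theta$, which is what closes the argument. So the moving-cluster path you allude to is indeed the key ingredient, but it enters as a direct Dirichlet-form comparison rather than through a spectral gap; this is both simpler and sidesteps the problem of estimating the gap on boxes whose size grows with $N$.
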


Once we have established the Law of Large Numbers for the empirical measure for the process with generator $\mathcal{L}_{\theta}^N$, the next
step is to obtain the \textbf{Central Limit Theorem} starting from the invariant measure $\nu_{\rho}$.

For each $z>0$ (resp. $z<0$) define $h_{z}:\mathbb{T}^d\rightarrow\mathbb{R}$ by $h_{z}(u)=\sqrt{2}\cos(2\pi z\cdot{u})$ (resp.
$h_{z}(u)=\sqrt{2}\sin(2\pi z\cdot u)$ and let $h_{0}=1$. Here $\cdot$ denotes the inner product of $\mathbb{R}^d$. It is well known that the
set $\{h_{z},z\in \mathbb{Z}^d\}$ is an orthonormal basis of $L^{2}(\mathbb{T}^d)$. In this space consider the operator $\Omega=1-\Delta$. A
simple computation shows that $\Omega h_{z}=\gamma_{z}h_{z}$ where $\gamma_{z}=1+4\pi^{2}||z||^{2}$.

For a positive integer $k$, denote by $\mathcal{H}_{k}$ the space obtained as the completion of $C^{\infty}(\mathbb{T}^d)$ endowed with the
inner product defined by $<f,g>_{k}=<f,\Omega^{k}g>$. Let $\mathcal{H}_{-k}$ denote the dual of $\mathcal{H}_{k}$ with respect to the inner
product of $L^{2}(\mathbb{T}^{d})$.

Recall the definition of the empirical measure in (\ref{eq:empirical}) where $\eta_{t}$ denotes the Markov process with generator
$\mathcal{L}_{N}^{\theta}$, with $0<\theta<2$. As we want to investigate the fluctuations of this measure, fix $\rho>0$ and denote by
$\mathcal{Y}_{.}^{N}$ the \textbf{density fluctuation field} acting on smooth functions $H$ as
\begin{equation} \label{densityfield}
\mathcal{Y}_{t}^{N}(H)=\frac{1}{N^{d/2}}\sum_{x\in{\mathbb{T}^d_{N}}}H\Big(\frac{x}{N}\Big)(\eta_{tN^2}(x)-\rho).
\end{equation}

Fix a positive integer $k$ and denote by $D([0,T],\mathcal{H}_{-k})$ (resp. $C([0,T],\mathcal{H}_{-k})$) the space of $\mathcal{H}_{-k}$
functions, that are right continuous with left limits (resp. continuous), endowed with the uniform weak topology. Denote by $\mathcal{Q}_{N}$
the probability measure on $D([0,T],\mathcal{H}_{-k})$ induced by $\mathcal{Y}^{N}_{.}$ and $\nu_{\rho}$.
\begin{theorem}
Fix an integer $k\geq{3}$. Denote by $\mathcal{Q}$ be the probability measure on $C([0,T],\mathcal{H}_{-k})$ corresponding to the stationary
generalized Ornstein-Uhlenbeck process with mean $0$ and covariance given by
\begin{equation*}
E_{\mathcal{Q}}[Y_{t}(H)Y_{s}(G)]=\frac{\textbf{Var}(\nu_{\rho},\eta(0))}{\sqrt{8\pi \rho(t-s)}}\int_{\mathbb{R}^d}du \int_{\mathbb{R}^d}dv
\bar{H}(u)\bar{G}(v)\exp\Big\{-\frac{(u-v)^{2}}{8(t-s)\rho}\Big\}
\end{equation*}
for every $0\leq{s}\leq{t}$ and $H$, $G$ in $\mathcal{H}_{k}$. Here $\bar{H}$ and $\bar{G}$ are periodic functions equal to $H$, $G$ in
$\mathbb{T}^d$.

Then, $(\mathcal{Q}_{N})_{N}$ converges weakly to the probability measure $\mathcal{Q}$. \label{th:flu1}
\end{theorem}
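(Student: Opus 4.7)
The plan is the standard martingale approach to equilibrium fluctuations, adapted to the degenerate dynamics via a Boltzmann--Gibbs principle built on the spectral-gap estimate of Section \ref{gap}. Starting from Dynkin's formula,
\begin{equation*}
\mathcal{Y}_t^N(H) \;=\; \mathcal{Y}_0^N(H) \;+\; \int_0^t N^2\mathcal{L}_\theta^N\mathcal{Y}_s^N(H)\, ds \;+\; M_t^N(H),
\end{equation*}
I use the gradient identity $W_{x,x+e_j}=\tau_x h_j-\tau_{x+e_j}h_j$ (for example $h_j(\eta)=\eta(0)\eta(e_j)+\eta(0)\eta(-e_j)-\eta(-e_j)\eta(e_j)$, which satisfies $E_{\nu_\rho}[h_j]=\rho^2$) together with $\mathcal{L}_P\eta(x)=\sum_j[W_{x-e_j,x}-W_{x,x+e_j}]$. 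A discrete summation by parts rewrites the $\mathcal{L}_P$ contribution to the drift as
\begin{equation*}
\int_0^t \frac{1}{N^{d/2}}\sum_{x\in\mathbb{T}_N^d}\sum_{j=1}^d \Delta_j^N H(x/N)\,\tau_x h_j(\eta_s)\, ds,
\end{equation*}
while the $N^{\theta-2}\mathcal{L}_S$ piece produces an analogous expression with $\tau_x h_j$ replaced by $\eta_s(x)$ and carrying the prefactor $N^{\theta-2}\to 0$. A carr\'e du champ computation, combined with the $L^1$ law of large numbers under $\nu_\rho$, shows that the predictable bracket of $M^N(H)$ converges to $4\rho\,\chi(\rho)\,t\,\|\nabla H\|_{L^2(\mathbb{T}^d)}^2$ with $\chi(\rho)=\rho(1-\rho)$; the SSEP contribution to this bracket is again $O(N^{\theta-2})$ and negligible.

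The crux is the equilibrium Boltzmann--Gibbs principle,
\begin{equation*}
\lim_{N\to\infty} E_{\nu_\rho}\!\left[\Bigl(\int_0^t \frac{1}{N^{d/2}}\sum_x \psi(s,x/N)\bigl[\tau_x h_j(\eta_s)-\rho^2-2\rho(\eta_s(x)-\rho)\bigr]\, ds\Bigr)^{\!2}\right]=0
\end{equation*}
for smooth $\psi$, which when inserted in the drift and combined with the identity $\sum_x \Delta^N H(x/N)=0$ on the torus reduces that drift to $\int_0^t \mathcal{Y}_s^N(2\rho\Delta H)\, ds$. Tightness of $(\mathcal{Q}_N)$ in $D([0,T],\mathcal{H}_{-k})$ then follows from Mitoma's criterion, which reduces the question to tightness of the real-valued process $\mathcal{Y}_\cdot^N(H)$ for every fixed $H\in\mathcal{H}_k$; this is obtained from Aldous' criterion using the $L^2$ bounds above, the exponent $k\ge 3$ ensuring summability of the dual basis via the eigenvalues $\gamma_z$. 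Any limit point solves the martingale problem: $\mathcal{Y}_t(H)-\mathcal{Y}_0(H)-\int_0^t \mathcal{Y}_s(2\rho\Delta H)\,ds$ is a continuous martingale with quadratic variation $4\rho\chi(\rho)\,t\,\|\nabla H\|_{L^2}^2$, and $\mathcal{Y}_0$ is Gaussian with covariance $\chi(\rho)\langle H,G\rangle_{L^2}$. This problem is well posed by Holley--Stroock, and its unique solution is the stationary generalized Ornstein--Uhlenbeck process with covariance given by $\chi(\rho)$ times the heat kernel of $\partial_t=2\rho\Delta$, namely $(8\pi\rho(t-s))^{-d/2}\exp(-|u-v|^2/[8\rho(t-s)])$, which matches the statement.

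The hardest step will be the Boltzmann--Gibbs principle itself, because the unperturbed dynamics $\mathcal{L}_P$ is not irreducible on the hyperplanes $\Sigma_{N,k}$ and the classical proof (Kipnis--Landim, Chapter 11) estimates the $H_{-1}$-norm of the centered local function by the Dirichlet form via a mesoscopic spectral-gap bound that would fail in the pure KCLG. The slowed SSEP perturbation is tailored to bypass this: $\mathcal{L}_\theta^N$ is irreducible on every $\Sigma_{N,k}$ and, as shown in Section \ref{gap}, has a spectral gap on cubes of side $\ell$ whose size is sufficient, when plugged into the one- and two-block replacement scheme developed in Section \ref{replacement}, to close the principle uniformly in $N$. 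The constraint $0<\theta<2$ is essential to this tradeoff: small enough that the SSEP contributions to both drift and bracket vanish in the limit (so that the macroscopic porous medium equation and its linearization remain untouched), and large enough that the perturbation drives local equilibration on the mesoscopic scale needed for the replacement.
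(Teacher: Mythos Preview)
Your outline is correct and matches the paper's approach: the paper omits the detailed proof, pointing to the Zero-Range argument in \cite{K.L.} and noting only that the Boltzmann--Gibbs principle goes through by using the ergodicity furnished by the $\mathcal{L}_S$ component of the perturbed generator. One small correction: you invoke the spectral gap of Section~\ref{gap}, but those bounds are stated only for $d=1$ and are not what the paper relies on here---the classical argument needs only the (well-known, any-$d$) SSEP spectral gap on microscopic boxes coming from the $N^{\theta-2}\mathcal{L}_S$ piece, exactly as in the One- and Two-Block estimates of Section~\ref{replacement}.
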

The proof of this theorem is very close to the one presented for the Zero-Range process in \cite{K.L.} and for this reason we have omitted it.
We note that, since the proof of the Boltzmann-Gibbs in \cite{K.L.} relies on the ergodicity of the Markov process, we can use the ergodic
properties of the generator $\mathcal{L}_{S}$ to obtain the result. We also remark if one considers the process $\eta_{t}$ with generator
$\mathcal{L}_{P}$, we obtain in the limit the same Ornstein-Uhlenbeck process as described above. We only stress that, in this case, we derive
the Boltzmann-Gibbs Priciple in a similar way as we do in the proof of the One Block estimate via the Relative Entropy method.

Finally, in section \ref{gap}  we investigate the spectral gap for the process in finite volume for $d=1$. Our aim is to study the dependence on the
size of the system of the spectral gap of the process. This, as is very well known, scales as $1/N^2$ for SSEP
on all hyperplanes $\Sigma_{N,k}$, uniformly in $k$. For our models, due to presence of constraints, the uniformity in $k$ is certainly lost,
see remark \ref{remarkgap}\\
In order to illustrate our results we need to introduce a few additional notation. Fix an integer $N$ and denote by $\Lambda_N$ the box of size
$N$, $\Lambda_N=\{1,2\dots N\}$ and by $\chi^N$ the configuration space $\chi^N=\{0,1\}^N$. In order to define the generator on $\chi^N$ we
could use definition \ref{eq:lpgenerator} with the sum restricted to $x,y\in\Lambda_N$. However, some care should be put when defining the jump
rate for sites close to the boundary of $\Lambda_N$, since  $c(x,y,\eta)$ as defined in (\ref{rates}) depend not only on the configuration on
$x$ and $y$ but also on their neighbouring sites which can be outside $\Lambda_N$. We denote by $\partial\Lambda_N$ the {\sl boundary set}
including all sites which do not belong to $\Lambda_N$ and are nearest neighbour to at least one site in $\Lambda_N$,
$\partial\Lambda_N=\{x\in\mathbb Z: x\not\in\Lambda_N, d(x,\Lambda_N)=1\}$, where as usual the distance between a point $x$ and the set
$\Lambda_N$ is the infimum of the distances between $y\in\Lambda_N$ and $x$. A possible way to define the finite volume generator is to imagine
that the configuration in the boundary set is frozen to a reference configuration, $\sigma$, and to define the finite volume rates as
$c^{\sigma}(x,y,\eta)=c(x,y,\eta\cdot \sigma)$ where $c$ are the rates in (\ref{rates}) and
$\eta\cdot\sigma\in\{0,1\}^{|\Lambda_N|+|\partial\Lambda_N|}$ is the configuration equal to $\eta(x)$ on sites  $x\in\Lambda_N$ and to
$\sigma(x)$ on $x\in\partial\Lambda_N$. In the following we make the choice $\sigma(x)=0$ in $x\in\partial\Lambda_N$ and we denote by
${\mathcal{L}}_{P,\Lambda_N}$ and ${\mathcal{L}}_{\theta,\Lambda_N}^N$ the Markov processes with this choice corresponding to
(\ref{eq:lpgenerator}) and (\ref{eq:sumgenerator}). For sake of clarity, we explicitly write the second one




\begin{equation} \label{perturbedgeneratoronomegaN}
({\mathcal{L}}_{\theta,\Lambda_N}^Nf)(\eta)=\sum_{x\in{\Lambda_{N}\setminus\{1,N-1\}}}
\Big(c(x,x+1,\eta)+{N^{\theta-2}}\Big)\eta(x)(1-\eta(x+1))(f(\eta^{x,x+1})-f(\eta))
\end{equation}
\begin{equation*}
+\sum_{x\in{\Lambda_{N}\setminus\{2,N\}}} \Big(c(x,x-1,\eta)+{N^{\theta-2}}\Big)\eta(x)(1-\eta(x-1))(f(\eta^{x,x-1})-f(\eta))
\end{equation*}
\begin{equation*}
+\Big(\eta(3)+{N^{\theta-2}}\Big)\Big(\eta(1)-\eta(2)\Big)\Big(f(\eta^{1,2})-f(\eta)\Big)
\end{equation*}
\begin{equation*}
+\Big(\eta(N-2)+{N^{\theta-2}}\Big)\Big(\eta(N-1)-\eta(N)\Big)\Big(f(\eta^{N-1,N})-f(\eta)\Big)
\end{equation*}
where $c(x,y,\eta)$ and $\eta^{x,y}$ was defined in (\ref{rates}) and (\ref{etaxy}), respectively.

Let, with a slight abuse of notation, $\Sigma_{N,k}$ denote again the hyperplanes with  $k$ particles, namely those in (\ref{hyperplane}) but with the sum running over $\Lambda_N$. For each $k$, the Markov process generated by ${\mathcal{L}}_{\theta,\Lambda_N}^N$ is irreducible on $\Sigma_{N,k}$. The same holds for the process generated by ${\mathcal{L}}_{P,\Lambda_N}$ but only for $k>N/3$. This can be again proved by using the fact that the model is non cooperative with two particles at distance at most two being a mobile cluster. In both cases the unique invariant measure is the uniform measure, $\nu_{N,k}$.


For a generator $\mathcal L$ with invariant measure $\mu$,
denote by $\lambda_{N}({\mathcal L})$ its spectral gap, defined by
\begin{equation*}
\lambda_{N}({\mathcal L})=\inf_{f\in{L^2(\mu)}}\frac{\mathfrak{D}_{\mathcal L}(f,\mu)}{\textbf{Var}(f,\mu)},
\end{equation*}
where $\mathfrak{D}_{\mathcal L}(f,\mu)$ denotes the Dirichlet form defined by
\begin{equation} \label{dirichlet form}
\mathfrak{D}_{\mathcal L}(f,\mu)=\int_{\chi_{d}^N}-f(\eta) {\mathcal L} f(\eta)\mu(d\eta).
\end{equation}
In the following we will also use the shortened notation
$\mathfrak{D}_{P}(f,\mu)$ and $\mathfrak{D}_{\theta}(f,\mu)$
to denote the Dirichlet form with generator ${\mathcal{L}}_{P,\Lambda_N}$ and ${\mathcal{L}}_{\theta,\Lambda_N}^N$, respectively.
Let $\rho=k/N$. We obtain that:
\begin{proposition} \label{spectralgaplarge}
Fix $k>N/3$. For the Markov process with generator ${\mathcal{L}}_{P,\Lambda_N}$, there exists a constant $C$ that does not depend on $N$ nor $k$ such that
\begin{equation*}
\lambda_{N}({\mathcal{L}}_{\theta,\Lambda_N}^N)\geq\lambda_N({\mathcal{L}}_{P,\Lambda_N})\geq{C\frac{(\rho-1/3)}{\rho N^2}}.
\end{equation*}
\end{proposition}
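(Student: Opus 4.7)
The first inequality is immediate: since $\mathcal{L}_{\theta,\Lambda_N}^N=\mathcal{L}_{P,\Lambda_N}+N^{\theta-2}\mathcal{L}_{S,\Lambda_N}$ and both generators are reversible with respect to the same measure $\nu_{N,k}$ on $\Sigma_{N,k}$, the corresponding Dirichlet forms are additive, so $\mathfrak{D}_\theta(f,\nu_{N,k})=\mathfrak{D}_P(f,\nu_{N,k})+N^{\theta-2}\mathfrak{D}_S(f,\nu_{N,k})\geq \mathfrak{D}_P(f,\nu_{N,k})$, while the variance is unchanged. The variational formula for the spectral gap then yields $\lambda_N(\mathcal{L}_{\theta,\Lambda_N}^N)\geq \lambda_N(\mathcal{L}_{P,\Lambda_N})$.

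For the nontrivial bound, my plan is to compare $\mathfrak{D}_P$ to the Dirichlet form $\mathfrak{D}_S$ of the SSEP on $\Sigma_{N,k}$, for which the classical Quastel estimate gives $\lambda_N(\mathcal{L}_{S,\Lambda_N})\geq c_0/N^2$ uniformly in $k$. It is enough to prove
\begin{equation*}
\mathfrak{D}_S(f,\nu_{N,k})\leq C\,\frac{\rho}{\rho-1/3}\,\mathfrak{D}_P(f,\nu_{N,k}),
\end{equation*}
which combined with the variational formula for SSEP gives the announced bound. I will obtain this comparison by the Sinclair--Diaconis--Stroock canonical paths method: for every SSEP transition $(\eta,\eta^{x,x+1})$ with $\eta(x)\neq\eta(x+1)$, I construct a path $\gamma(\eta,x)$ in the state space consisting of moves that are allowed under $\mathcal{L}_{P,\Lambda_N}$ (i.e.\ have strictly positive rate), and bound
\begin{equation*}
\mathfrak{D}_S(f,\nu_{N,k})\leq \Big(\max_{(\sigma,\sigma')}\frac{1}{\nu_{N,k}(\sigma)\,c_P(\sigma,\sigma')}\sum_{(\eta,x)\,:\,(\sigma,\sigma')\in\gamma(\eta,x)}|\gamma(\eta,x)|\,\nu_{N,k}(\eta)\,c_S(\eta,\eta^{x,x+1})\Big)\mathfrak{D}_P(f,\nu_{N,k}).
\end{equation*}

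The construction of $\gamma(\eta,x)$ exploits the non-cooperative character described in Section~\ref{results}: I locate the nearest mobile cluster (a pair of occupied sites at distance at most two), shift it deterministically to a position adjacent to $(x,x+1)$ using allowed moves of the type explicitly described in the excerpt, perform the required exchange using $c(x,x+1,\eta)\geq 1$ once the cluster is in place, and send the cluster back along the reverse trajectory. Since $k>N/3$, a mobile cluster always exists (pigeonhole: at most $\lfloor N/3\rfloor$ particles can be mutually at distance $\geq 3$), and in fact the number of mobile clusters in $\eta$ is at least of order $k-N/3=N(\rho-1/3)$. Each path has length $O(N)$ and each of its moves has rate bounded below by a constant, so $|\gamma(\eta,x)|/c_P(\sigma,\sigma')=O(N)$.

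The main obstacle is the congestion bound: counting how many source pairs $(\eta,x)$ can route their canonical path through a given allowed edge $(\sigma,\sigma')$. By choosing the mobile cluster to be moved in $\eta$ according to a \emph{deterministic} rule (say, the leftmost one), the configuration $\eta$ is reconstructed from $(\sigma,\sigma')$ up to the position of the cluster and a location within the path; the number of such reconstructions is $O(N)$. The entropic cost $\nu_{N,k}(\eta)/\nu_{N,k}(\sigma)$ is bounded uniformly because $\nu_{N,k}$ is the uniform measure and $\eta,\sigma$ have the same particle number. The density factor arises because the rule ``pick the leftmost mobile cluster'' is well defined only on the event that at least one exists, whose conditional probability given the deterministic information retained along the path is of order $(\rho-1/3)/\rho$ by the pigeonhole count; inverting this conditional probability produces the factor $\rho/(\rho-1/3)$. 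Putting the length $O(N)$, the $O(N)$-fold multiplicity, the $O(1/N^2)$ speedup already present in the definition of $\lambda_N(\mathcal{L}_{S,\Lambda_N})$ and the density factor together yields the stated bound.
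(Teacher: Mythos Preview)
Your first paragraph is fine and matches the paper.  The difficulty is in the second inequality, and here the bookkeeping does not close.

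You aim for the comparison $\mathfrak{D}_S(f,\nu_{N,k})\leq C\,\dfrac{\rho}{\rho-1/3}\,\mathfrak{D}_P(f,\nu_{N,k})$ with $C$ independent of $N$, and then plan to combine it with the SSEP gap $\lambda_N(\mathcal{L}_{S,\Lambda_N})\geq c_0/N^2$.  But the canonical--path estimate you sketch does \emph{not} give this comparison.  With paths of length $O(N)$ and congestion $O(N)$ the Diaconis--Stroock bound produces
\[
\mathfrak{D}_S(f,\nu_{N,k})\;\le\;C\,N^2\,\frac{\rho}{\rho-1/3}\,\mathfrak{D}_P(f,\nu_{N,k}),
\]
and plugging this into $\textbf{Var}\leq (N^2/c_0)\,\mathfrak{D}_S$ yields only $\lambda_N(\mathcal{L}_{P,\Lambda_N})\gtrsim (\rho-1/3)/(\rho N^4)$, off by $N^2$.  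In your final paragraph you try to absorb the extra $N^2$ into ``the $O(1/N^2)$ speedup already present in the definition of $\lambda_N(\mathcal{L}_{S,\Lambda_N})$'', but that is the \emph{same} $1/N^2$ you already used when invoking the SSEP Poincar\'e inequality; you are counting it twice.  The explanation of how the deterministic ``leftmost cluster'' rule generates the factor $(\rho-1/3)/\rho$ through a ``conditional probability'' is also not correct: when $k>N/3$ a mobile cluster exists in \emph{every} configuration of $\Sigma_{N,k}$, so no conditioning enters.

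The paper avoids this loss by comparing not to SSEP but to Quastel's \emph{long--range} exclusion process, for which $\textbf{Var}(f,\nu_{N,k})\leq \mathfrak{D}_{LR}(f,\nu_{N,k})$ with \emph{no} $N^2$ prefactor.  The same path construction you describe, combined with an \emph{averaging} over all available mobile clusters (rather than a deterministic choice), yields $\mathfrak{D}_{LR}\leq C\,\dfrac{\rho}{\rho-1/3}\,N^2\,\mathfrak{D}_P$.  The density factor appears transparently: one inserts $1\leq \frac{2}{|\mathcal B|}\sum_{z,l}\eta(z)\eta(z+l)$, where $|\mathcal B|\geq \tfrac32(k-N/3)$ is the pigeonhole lower bound on the number of mobile clusters, and at the end the leftover $\sum_z\eta(z)=k$ produces $k/|\mathcal B|\sim \rho/(\rho-1/3)$.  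If you want to keep SSEP as the reference chain, you would need a genuinely sharper path/congestion argument than ``length $O(N)$, multiplicity $O(N)$''.
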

\begin{proposition} \label{spectralgapsmall}
Fix $k\leq{N/3}$. For the Markov process with generator $\mathcal{L}_{\theta,\Lambda_N}^N$, where $\theta=1$, there exists a constant $C$ that
does not depend on $N$ nor $k$ such that:
\begin{equation*}
\lambda_{N}({\mathcal{L}}_{\theta,\Lambda_N}^N)\geq{C\frac{\rho^2}{ N^2}}.
\end{equation*}
\end{proposition}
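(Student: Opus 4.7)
I plan to bound $\lambda_N(\mathcal{L}_{\theta,\Lambda_N}^N)$ from below by comparing its Dirichlet form with that of the SSEP and invoking the classical SSEP spectral gap.

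\emph{Step 1 (SSEP gap).} The SSEP on $\Lambda_N$ restricted to $\Sigma_{N,k}$ has spectral gap at least $C_{0}/N^{2}$, uniformly in $k$ and $N$. Equivalently,
\begin{equation*}
\mathrm{Var}_{\nu_{N,k}}(f)\;\leq\;C_{0}N^{2}\,\mathfrak{D}_{S}(f,\nu_{N,k}).
\end{equation*}
Combined with a Dirichlet form comparison
\begin{equation*}
\mathfrak{D}_{S}(f,\nu_{N,k})\;\leq\;C\,\rho^{-2}\,\mathfrak{D}_{\theta}(f,\nu_{N,k}),
\end{equation*}
this immediately yields the claim $\lambda_{N}(\mathcal{L}_{\theta,\Lambda_N}^{N})\geq C\rho^{2}/N^{2}$ via the variational characterisation. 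So the whole task reduces to proving this Dirichlet form comparison.

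\emph{Step 2 (Trivial regime).} When $k\leq\sqrt{N}$ we have $\rho^{-2}\geq N$, so the pointwise bound $c(x,x+1,\eta)+N^{-1}\geq N^{-1}$ and the corresponding termwise domination $\mathfrak{D}_{S}\leq N\mathfrak{D}_{\theta}\leq\rho^{-2}\mathfrak{D}_{\theta}$ already conclude.

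\emph{Step 3 (Non-trivial regime and path argument).} For $\sqrt{N}\leq k\leq N/3$, I fix an SSEP elementary exchange at $(x,x+1)$ together with $\eta$ satisfying $\eta(x)(1-\eta(x+1))=1$. Whenever $c(x,x+1,\eta)>0$ the SSEP contribution is dominated by the KCLG Dirichlet form term at the same edge, since then $c(x,x+1,\eta)\geq 1$. When $c(x,x+1,\eta)=0$, I exhibit a sequence $\eta=\eta^{(0)},\dots,\eta^{(L)}=\eta^{x,x+1}$ of $\mathcal{L}_{\theta,\Lambda_N}^{N}$-elementary transitions and apply the discrete Cauchy--Schwarz inequality
\begin{equation*}
\bigl(f(\eta^{x,x+1})-f(\eta)\bigr)^{2}\;\leq\;L\sum_{i=0}^{L-1}\bigl(f(\eta^{(i+1)})-f(\eta^{(i)})\bigr)^{2}.
\end{equation*}
The sequence is built exactly from the mobile-cluster construction that already serves Proposition~\ref{spectralgaplarge}: I locate the mobile cluster (two particles at distance at most two) nearest to $(x,x+1)$, transport it via KCLG-allowed moves to a position where $(x,x+1)$ becomes KCLG-allowed, perform the exchange, and transport the mobile cluster back. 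In the blocked case, where no mobile cluster sits within the required neighbourhood, one slowed SSEP move (contributing to $N^{-1}\mathfrak{D}_{S}$) is used to create one, after which the previous construction applies.

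\emph{Step 4 (Congestion).} Summing the Cauchy--Schwarz bounds over $(x,x+1,\eta)$ against $\nu_{N,k}$ and using reversibility, each $\mathcal{L}_{\theta}$-edge accumulates a contribution from the paths passing through it. Translation invariance of $\nu_{N,k}$, together with the fact that mobile clusters have density $\sim\rho^{2}$ and hence typical spacing $\sim\rho^{-2}$, is used to show that this accumulated factor is at most $C\rho^{-2}$, which is precisely the desired comparison constant.

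\emph{Main obstacle.} The technical core is the congestion estimate of Step~4: verifying that the canonical paths built in Step~3 do not overload any single $\mathcal{L}_{\theta}$-edge by more than a factor $\rho^{-2}$. One must carefully handle simultaneously the long paths needed for rare near-blocked configurations (where the extra SSEP creation move costs a factor $N$) and the typical configurations (where the path length is of order $\rho^{-2}$ but many competing paths may traverse the same KCLG edge). The non-cooperative freedom in choosing which mobile cluster to use is what balances these two effects and delivers the single factor $\rho^{-2}$ rather than $\rho^{-4}$.
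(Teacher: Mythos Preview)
Your central claim in Step~1, the Dirichlet form comparison
\[
\mathfrak{D}_{S}(f,\nu_{N,k})\;\leq\;C\,\rho^{-2}\,\mathfrak{D}_{\theta}(f,\nu_{N,k}),
\]
is false in the regime $\sqrt{N}\ll k\leq N/3$ where you actually need it. Take $f$ to be the indicator of the set of \emph{blocked} configurations, namely those $\eta\in\Sigma_{N,k}$ in which every pair of occupied sites is at distance at least $3$. For such $\eta$ every rate $c(x,x+1,\eta)$ vanishes on nontrivial bonds, and conversely if $c(x,x+1,\eta)>0$ with $\eta(x)\neq\eta(x+1)$ then $\eta^{x,x+1}$ still contains two particles at distance at most~$2$ (the facilitating particle at $x-1$ or $x+2$ remains). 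Hence $\mathfrak{D}_{P}(f,\nu_{N,k})=0$, so $\mathfrak{D}_{\theta}(f,\nu_{N,k})=N^{-1}\mathfrak{D}_{S}(f,\nu_{N,k})$ and the ratio $\mathfrak{D}_{S}/\mathfrak{D}_{\theta}$ equals $N$, not $\rho^{-2}$. For $k$ of order $N$ (say $k=\lfloor N/3\rfloor$) this exceeds any constant multiple of $\rho^{-2}$. The flaw sits precisely where your ``Main obstacle'' paragraph hedges: your heuristic that mobile clusters have typical spacing $\rho^{-2}$ is a statement about $\nu_{N,k}$-typical configurations, but a Dirichlet form inequality must hold for every $f$, including indicators of exponentially rare sets. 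Relatedly, the phrase ``one slowed SSEP move is used to create [a mobile cluster]'' is not correct: for $k<N/4$ a single SSEP move need not create any pair at distance $\leq 2$.

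The paper avoids this obstruction by comparing not with SSEP but with the long-range exclusion process (Quastel's bound $\mathrm{Var}\leq\mathfrak{D}_{LR}$), and by introducing a free scale parameter $j$: one counts, for each $\eta$, the number of pairs of particles at distance at most $j$, proves a deterministic lower bound of order $k-N/(j+1)$ on this count, and builds the path using $O(j)$ SSEP moves plus $O(N)$ KCLG moves. This yields
\[
\mathfrak{D}_{LR}\;\leq\;\frac{Cj^{2}\rho}{\rho-1/j}\,N\,\mathfrak{D}_{S}\;+\;\frac{Cj\rho}{\rho-1/j}\,N^{2}\,\mathfrak{D}_{P},
\]
and optimising $j\sim 1/\rho$ gives $\mathrm{Var}\leq C\rho^{-2}N\mathfrak{D}_{S}+C\rho^{-1}N^{2}\mathfrak{D}_{P}$, from which the result for $\theta=1$ follows. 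The key point is that the SSEP and KCLG contributions are kept separate and each receives its own prefactor; they are never collapsed into a single comparison of $\mathfrak{D}_{S}$ against $\mathfrak{D}_{\theta}$.
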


\section{The Relative Entropy Method}
\label{relativeentropy}
In this section, we prove Theorem \ref{th:hlrem}. Let $\rho_{0}:\mathbb{T}^d\rightarrow{[0,1]}$ be a profile of class
$C^{2+\epsilon}(\mathbb{T}^d)$ that satisfies the bound condition (\ref{boundcondinitialprofile}) and let $(\mu^{N})_{N}$ be a sequence of
probability measures on $\chi_{d}^N$ that satisfies (\ref{entropymunurho0}).
Fix a time $t\geq{0}$. Denote by $S_{t}^{N,P}$ the semigroup associated to the generator $\mathcal{L}_{P}$ speeded up by $N^2$ and by
$\mu_{t}^{N}$ the distribution of the process at time $t$.
Denote by $\nu_{\rho(t,\cdot)}^{N}$ the product measure with slowly varying parameter associated to the profile $\rho(t,\cdot)$:
\begin{equation*}
\nu_{\rho(t,\cdot)}^{N}\Big\{\eta,\eta(x)=1\Big\}=\rho(t,x/N).
\end{equation*}
It is well known that in order to prove the Theorem \ref{th:hlrem} it is enough to show:

\begin{theorem}
Let $\rho_{0}:\mathbb{T}^d\rightarrow{\mathbb{R}}$ be an initial profile of class $C^{2+\epsilon}(\mathbb{T}^d)$ that satisfies the bound
condition (\ref{boundcondinitialprofile}) and $(\mu^{N})_{N}$ a sequence of probability measures in $\chi_d^N$ that satisfies the condition
(\ref{entropymunurho0}). Then, for every $t\geq{0}$
\begin{equation*}
H\Big(\mu^{N}_{t}/\nu_{\rho(t,\cdot)}^{N}\Big)=o(N^d),
\end{equation*}
where $\rho(t,u)$ is a smooth solution of (\ref{eq:porous m}).
\end{theorem}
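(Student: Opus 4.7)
The plan is to apply Yau's relative entropy method. Set $f_t^N = d\mu_t^N/d\nu_{\rho(t,\cdot)}^N$ and $H_N(t) = H(\mu_t^N | \nu_{\rho(t,\cdot)}^N)$. Yau's inequality (Lemma 6.1.4 in \cite{K.L.}) gives
\[
\partial_t H_N(t) \leq \int V_{t,N}(\eta)\, f_t^N(\eta)\, \nu_{\rho(t,\cdot)}^N(d\eta), \qquad V_{t,N}(\eta) := N^2 \mathcal{L}_P^{*,t}\mathbf{1}(\eta) - \partial_t \log \psi_t^N(\eta),
\]
where $\mathcal{L}_P^{*,t}$ is the adjoint of $\mathcal{L}_P$ in $L^2(\nu_{\rho(t,\cdot)}^N)$ and $\psi_t^N$ denotes the Radon--Nikodym derivative of $\nu_{\rho(t,\cdot)}^N$ with respect to a fixed reference Bernoulli measure. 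The first task is to obtain a tractable expression for $V_{t,N}$.

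The second step exploits the gradient structure of the rates (\ref{rates}). Writing the current $W_{0,e_j}(\eta)$ as a discrete gradient of a local function $h_j$ and performing two summations by parts on $N^2 \mathcal{L}_P^{*,t}\mathbf{1}$, one obtains a main contribution of the form $\sum_{x,j} (\Delta_N G_j)(t,x/N)\, \tau_x h_j(\eta)$, where $G_j$ is a smooth function of $\rho(t,\cdot)$ controlled uniformly in $N$ and $t$. Such uniform control follows from the $C^{1+\epsilon,2+\epsilon}$ regularity of the solution of (\ref{eq:porous m}) (Theorem A2.4.1 of \cite{K.L.}) together with (\ref{boundcondinitialprofile}) and the maximum principle, which preserve $\delta_0 \leq \rho(t,\cdot) \leq 1-\delta_0$. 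Combining this with the $\partial_t \log \psi_t^N$ contribution and using $\partial_t \rho = \Delta \rho^2$, the pieces reorganize so that up to an additive error of order $o(N^d)$ one may write
\[
V_{t,N}(\eta) = \sum_{x,j} G_j(t,x/N)\, \big\{ \tau_x h_j(\eta) - \chi(\rho(t,x/N)) \big\},
\]
where $\chi(\rho) = E_{\nu_\rho}[h_j]$ is the equilibrium mean of the local function $h_j$ under the Bernoulli product measure of density $\rho$.

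The heart of the proof is then a one-block (and two-block) replacement lemma that substitutes $\tau_x h_j(\eta)$ by $\chi(\bar\eta^\ell(x))$, where $\bar\eta^\ell(x)$ is the empirical density on a mesoscopic box of side $\ell$ around $x$. This step is the main obstacle, because the restriction of the dynamics to a box is not irreducible on every hyperplane and the standard SSEP argument based on ergodicity breaks down. The proposed remedy exploits the non-cooperative structure described in the introduction together with (\ref{boundcondinitialprofile}): since $\delta_0 \leq \rho(t,\cdot) \leq 1-\delta_0$, the measure $\nu_{\rho(t,\cdot)}^N$ assigns probability exponentially small in $\ell^d$ to configurations whose restriction to a given box lies outside the main irreducible component (the one containing a mobile cluster); this atypical event contributes an $o(N^d)$ error via the entropy inequality, while on its complement equivalence of ensembles holds inside the main component and the equilibration rate is quantified in the spirit of Proposition \ref{spectralgaplarge}. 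The two-block estimate then replaces $\bar\eta^\ell(x)$ by $\rho(t,x/N)$ by continuity of $\chi$.

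Once these replacements are performed with an $o(N^d)$ error, the residual sum is essentially quadratic in $\bar\eta^\ell(x) - \rho(t,x/N)$. Applying the entropy inequality
\[
\int F\, d\mu_t^N \leq \frac{1}{\gamma}\log\int e^{\gamma F}\, d\nu_{\rho(t,\cdot)}^N + \frac{1}{\gamma} H_N(t)
\]
with a sufficiently small parameter $\gamma$ and estimating the exponential moment by the standard large-deviation bound for Bernoulli product measures yields $\partial_t H_N(t) \leq C H_N(t) + o(N^d)$. Since $H_N(0) = o(N^d)$ by hypothesis (\ref{entropymunurho0}), Gronwall's lemma concludes that $H_N(t) = o(N^d)$ for every $t \geq 0$.
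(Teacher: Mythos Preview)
Your outline follows the paper's approach: Yau's entropy-production inequality, an algebraic reduction using the gradient form of the rates, a one-block estimate adapted to the non-ergodicity by splitting according to the ``good'' irreducible component containing a mobile cluster (whose complement has $\nu^N_{\rho(t,\cdot)}$-probability exponentially small in $\ell^d$ thanks to the bound $\rho\ge\delta_0$), and finally the entropy inequality plus Gronwall. This is exactly what the paper does.

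Two points need correcting. First, the computation of $V_{t,N}$ is incomplete: when you expand $(\psi_t^N)^{-1}N^2\mathcal{L}_P\psi_t^N$ you do not get only a discrete-Laplacian term against $h_j$; there is a second contribution involving $(\partial_{u_j}\lambda(t,x/N))^2$ multiplied by $\tau_x g_j(\eta)$ with $g_j(\eta)=c(0,e_j,\eta)(\eta(0)-\eta(e_j))^2$. Both local functions $h_j$ and $g_j$ must be treated by the one-block estimate, and after replacement the resulting expression is the second-order Taylor remainder $\tilde h(\eta^\ell(x))-\tilde h(\rho)-\tilde h'(\rho)[\eta^\ell(x)-\rho]$ (and similarly for $g$), which is what makes the residual quadratic.

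Second, there is no two-block estimate in the relative entropy method, and none is needed here. You never replace $\bar\eta^\ell(x)$ by $\rho(t,x/N)$; rather, the Taylor expansion around $\rho(t,x/N)$ is the mechanism that produces the quadratic residual, and the linear term cancels against the choice of $\rho$ as a solution of the porous medium equation. The paper's one-block lemma (its Lemma~\ref{th:oneblockmmp}) is the only replacement needed; the reference to the spectral gap of Proposition~\ref{spectralgaplarge} is also unnecessary, since on the good component the standard compactness argument for the one-block suffices.
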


\begin{proof}
Fix $\alpha\in(0,1)$ and an invariant measure $\nu_{\alpha}$. Let
\begin{equation*}
\psi_{t}^{N}=\frac{d\nu_{\rho(t,\cdot)}^{N}}{d\nu_{\alpha}} \\,\hspace{0,5cm} f_{N}(t)=\frac{d\mu_{t}^{N}}{d\nu_{\alpha}} \\,\hspace{0,5cm}
H_{N}(t)=H\Big(\mu^{N}_{t}/\nu_{\rho(t,\cdot)}^{N}\Big).
\end{equation*}
Since the measures $\nu_{\rho(t,\cdot)}^{N}$ and $\nu_{\alpha}$ are product, it is very simple to obtain an expression for $\psi_{t}^{N}$:
\begin{equation*}
\psi_{t}^{N}(\eta)=\frac{1}{Z_{t}^{N}}\exp\Big\{{\sum_{x\in{\mathbb{T}_{N}^d}}\eta(x)\lambda(t,x/N)}\Big\},
\end{equation*}
where
\begin{equation*}
\lambda(t,u)=\log \Big(\frac{\rho(t,u)(1-\alpha)}{\alpha(1-\rho(t,u))}\Big),
\end{equation*}
and $Z_{t}^{N}$ is a renormalizing constant.

In order to prove the result, we are going to show that
\begin{equation*}
H_{N}(t)\leq{o(N^d)+\frac{1}{\gamma}\int_{0}^{t}H_{N}(s)ds}
\end{equation*}
for some $\gamma>0$, and apply Gronwall inequality to conclude.

There is a celebrated estimate for the entropy production due to Yau \cite{Y.}:
\begin{equation}  \label{eq:entropyofYau}
\partial_{t}H_{N}(t)\leq{\int\Big\{\frac{N^{2}\mathcal{L}_{P}^{*}\psi_{t}^{N}(\eta)}{\psi_{t}^{N}(\eta)}-
\partial_{t}\log\psi_{t}^{N}(\eta)\Big\}f_{N}(t)(\eta)\nu_{\alpha}}(d\eta),
\end{equation}
where $\mathcal{L}_{P}^{*}$ is the adjoint operator of $\mathcal{L}_{P}$ in $L^{2}(\nu_{\alpha})$.

Here and after, for a local function $f$ we use the notation $\tilde{f}(\rho)=E_{\nu_{\rho}}[f(\eta)]$. By simple computations together with the
One-block estimate whose proof is presented at the end of this section, we can rewrite
$(\psi_{t}^{N}(\eta))^{-1}\{N^{2}\mathcal{L}_{P}\psi_{t}^{N}(\eta)-\partial_{t}\psi_{t}^{N}(\eta)\}$ as
\begin{equation*}
\sum_{x\in{\mathbb{T}_{N}^d}}\sum_{j=1}^{d}\partial^{2}_{u_{j}}\lambda(t,x/N)\Big\{\tilde{h}(\eta^{l}(x))-\tilde{h}(\rho(t,x/N))-\tilde{h}'(\rho(t,x/N))[\eta^{l}(x)-\rho(t,x/N)]\Big\}
\end{equation*}
\begin{equation*}
+\sum_{x\in{\mathbb{T}_{N}^d}}\sum_{j=1}^{d}(\partial_{u_{j}}\lambda(t,x/N))^{2}\Big\{\tilde{g}(\eta^{l}(x))-\tilde{g}(\rho(t,x/N))-\tilde{g}'(\rho(t,x/N))[\eta^{l}(x)-\rho(t,x/N)]\Big\},
\end{equation*}
plus a term of order $o(N^d)$, where
\begin{equation} \label{function h_{j}}
h_{j}(\eta)=\eta(0)\eta(e_{j})+\eta(0)\eta(-e_{j})-\eta(-e_{j})\eta(e_{j}),
\end{equation}
\begin{equation}  \label{function g_{j}}
g_{j}(\eta)=c(0,e_{j},\eta)(\eta(0)-\eta(e_{j}))^2
\end{equation}
and
\begin{equation} \label{empiricaldensity}
\eta^{l}(x)=\frac{1}{(2l+1)^d}\sum_{|y-x|\leq{l}}\eta(y).
\end{equation}

Repeating standard arguments of the relative entropy method, the result follows. We refer the reader to chapter 6 of \cite{K.L.} for details.
\end{proof}
\subsection{One-Block estimate}
\quad \vspace{0.5cm}

The main difficulty in the derivation of the One-Block estimate is the fact that, as already discussed, the process is not ergodic on
hyperplanes with a fixed particle number. In order to overcome this problem, we separate the set of configurations into two sets: the
irreducible component  that contains all configuration with at least one d-dimensional hypercube of particles of linear size $2$  (and all the
configurations that can be connected to them) and the remaining configurations. In the first case the standard proof is easily adapted, while
for the second case we will use as a key ingredient the fact that this set has small measure with respect to $\nu^{N}_{\rho(t,\cdot)}$.

Now we introduce some notation. Denote the also called Dirichlet form associated to the generator $\mathcal{L}_{P}$ and a measure $\mu$ in
${\chi_{d}^N}$, by $D_{P}(f,\mu)$ which is defined on positive functions by
\begin{equation} \label{dirichletformsqrtf}
D_{P}(f,\mu)=\mathfrak{D}_{P}(\sqrt{f},\mu)
\end{equation}
 and $\mathfrak{D}_{P}(f,\mu)$ was defined in (\ref{dirichlet form}). Let $f^{N,P}_{t}$ denote the
Radon-Nikodym density of $\mu^{N,P}(t)=\frac{1}{t}\int_{0}^{t}\mu^NS^{N,P}_{s}ds$ with respect to $\nu_{\alpha}$.

\begin{lemma}({One-block Estimate}) \label{th:oneblockmmp}

\quad\
 For every local function $\psi$ and for small $\gamma$
\begin{equation*}
\limsup_{l\rightarrow{+\infty}}\\\limsup_{N\rightarrow{+\infty}}\\
\\\int\frac{1}{N^d}\sum_{x\in{\mathbb{T}_{N}^d}}\tau_{x}V_{l,\psi}(\eta)f^{N,P}_{t}(\eta)\nu_{\alpha}(d\eta)\leq{\frac{1}{\gamma N^d}\int_{0}^{t}H_{N}(s)ds}
\end{equation*}
where
\begin{equation} \label{eq:Vreplacement}
V_{l,\psi}(\eta)=\Big|\frac{1}{(2l+1)^d}\sum_{|y|\leq{l}}\tau_{y}\psi(\eta)-\tilde{\psi}(\eta^{l}(0))\Big|
\end{equation}
and with $\eta^{l}(0)$ as defined in (\ref{empiricaldensity}).
\end{lemma}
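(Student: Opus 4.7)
The plan is to adapt the Guo-Papanicolaou-Varadhan proof of the One-Block estimate to our non-ergodic setting. As anticipated in the remarks preceding the lemma, the configurations inside the local box will be split into a \emph{good} part, consisting of the main irreducible component (those containing a $d$-dimensional hypercube of side length $2$, together with all configurations connected to them), and a \emph{bad} part which has exponentially small measure under $\nu_\alpha$.

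I first perform the standard reductions. The bound condition~\eqref{boundcondinitialprofile} combined with~\eqref{entropymunurho0} implies $H(\mu^N/\nu_\alpha) = O(N^d)$ for any fixed $\alpha \in (0,1)$, since $H(\nu^N_{\rho_0(\cdot)}/\nu_\alpha) = O(N^d)$ when $\rho_0$ is bounded away from $0$ and $1$. I choose $\alpha \in (1/3, 1-\delta_0)$, which is possible because $\delta_0 \leq 1/2 < 2/3$. Yau's entropy production inequality then yields $\int_0^t D_P(f_s, \nu_\alpha)\, ds \leq CN^{d-2}$. Translation invariance of $\nu_\alpha$ and convexity of the Dirichlet form reduce the LHS of the lemma to an expression of the form $\int V_{l,\psi}(\eta)\,\bar f(\eta)\,\nu_\alpha^{B_l}(d\eta)$, where $\bar f$ depends only on the configuration inside the box $B_l = \{y\in\mathbb{T}_N^d:|y|\leq l\}$ and satisfies $D_{P,B_l}(\bar f,\nu_\alpha^{B_l}) \to 0$ as $N\to\infty$ with $l$ fixed.

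Next I condition on the particle number $k$ in $B_l$ and decompose $\Sigma_{B_l,k} = \mathcal{G}_{l,k} \sqcup \mathcal{B}_{l,k}$, where $\mathcal{G}_{l,k}$ denotes the main irreducible component. On $\mathcal{G}_{l,k}$, Proposition~\ref{spectralgaplarge} provides a spectral gap of order $l^{-2}$, uniformly in $k$ with $k/|B_l|$ bounded away from $1/3$. Since large deviations under $\nu_\alpha$ concentrate $k/|B_l|$ around $\alpha > 1/3$, the relevant densities stay safely above this threshold. The classical GPV argument then applies: the spectral gap converts the smallness of the restricted Dirichlet form into $L^2$-closeness of the conditioned density to the constant function, and the equivalence of ensembles---which persists on $\mathcal{G}_{l,k}$ because the exponentially small canonical mass $\nu_{B_l,k}(\mathcal{B}_{l,k})$ ensures $\nu_{B_l,k}(\cdot \mid \mathcal{G}_{l,k})$ is close to the grand canonical measure $\nu_{k/|B_l|}$---forces $\bar\psi_l$ to concentrate on $\tilde\psi(k/|B_l|)$, so the good contribution vanishes.

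For the bad part, partitioning $B_l$ into disjoint $2^d$-hypercubes and observing that any configuration in $\mathcal{B}_l$ leaves every such hypercube with at least one vacancy yields $\nu_\alpha(\mathcal{B}_l) \leq (1-\alpha^{2^d})^{\lfloor (l+1)/2 \rfloor^d}$, exponentially small in $l^d$. Since $V_{l,\psi}$ is uniformly bounded (because $\psi$ is local and $\eta\in\{0,1\}$), the entropy inequality applied with an appropriately scaled parameter controls the bad-part contribution by the relative entropy of the time-averaged measure $\mu^{N,P}(t)$ with respect to $\nu_\alpha$. Convexity of entropy together with the bound $|H(\mu_s^N/\nu_\alpha) - H_N(s)| = O(N^d)$ then converts this into the stated form $\gamma^{-1}N^{-d}\int_0^t H_N(s)\,ds$, up to errors that vanish as $l\to\infty$. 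The main obstacle is the careful calibration of the scaling parameter in the entropy inequality, which must be large enough to exploit the exponential smallness of $\nu_\alpha(\mathcal{B}_l)$ yet small enough to produce exactly the stated form of the RHS, while simultaneously preserving the spectral gap threshold and the equivalence of ensembles after restriction to $\mathcal{G}_{l,k}$.
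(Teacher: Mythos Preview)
Your decomposition into the main irreducible component and its complement is correct and matches the paper's strategy. However, there is a genuine gap in your treatment of the bad set, and a secondary issue in your treatment of the good set.

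\textbf{The main gap.} For the bad part you apply the entropy inequality with respect to the \emph{homogeneous} measure $\nu_\alpha$. This produces $H(\mu_s^N/\nu_\alpha)$ on the right-hand side, and you then try to replace it by $H_N(s)=H(\mu_s^N/\nu^N_{\rho(s,\cdot)})$ using $|H(\mu_s^N/\nu_\alpha)-H_N(s)|=O(N^d)$. But after dividing by $\gamma N^d$ this discrepancy is $O(1/\gamma)$, a constant that depends neither on $l$ nor on $N$ and therefore does \emph{not} vanish in any of the limits; it would survive as an $O(N^d)$ term in Yau's entropy production inequality and destroy the Gronwall argument. The paper avoids this by applying the entropy inequality directly with respect to the \emph{inhomogeneous} product measure $\nu^N_{\rho(s,\cdot)}$, so that the entropy term is exactly $H_N(s)$. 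The smallness of the bad set is then obtained from the lower bound $\rho(s,u)\ge\delta_0$ (propagated from \eqref{boundcondinitialprofile} by the maximum principle), giving $\nu^N_{\rho(s,\cdot)}(\mathcal{E}_{x,l}^c)\le(1-\delta_0^{2^d})^{l^d}$, and the H\"older/log computation then shows the second term vanishes as $l\to\infty$ provided $\gamma$ is small enough that $2^d\gamma+\log(1-\delta_0^{2^d})<0$. Your parameter $\alpha$ and the restriction $\alpha>1/3$ play no role.

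\textbf{Secondary issue.} For the good part you invoke Proposition~\ref{spectralgaplarge}, but that result is proved only in $d=1$ and only for $k>N/3$; on the irreducible component $\mathcal{G}_{l,k}$ with $k/|B_l|\le 1/3$ (which is nonempty as soon as $k\ge 2^d$) you have no spectral gap bound available. This is unnecessary: the standard GPV argument on an irreducible component needs only that $D_{P,B_l}(\bar f_l,\nu_\alpha^{B_l})\to 0$ as $N\to\infty$ at fixed $l$, compactness of probability measures on the finite box, and the fact that zero Dirichlet form forces the limit to be constant on each irreducible piece. No quantitative gap is required, and this is exactly what the paper means by ``repeat the standard arguments of the One-block estimate'' on $\mathcal{E}_{x,l}$.
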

\begin{proof}
Fix $x\in{\mathbb{T}_{N}^d}$ and denote by $\mathcal{Q}_{x,l}$ the set of configurations in the box of center $x$ and radius $l$ containing at least one d-dimensional hypercube of linear size $2$ which is completely filled:
\begin{equation*}
\mathcal{Q}_{x,l}=\Big\{\eta: \sum_{y\in C^{x}_y}\prod_{z \in Q_y }\eta(z)\geq{1}\Big\}.
\end{equation*}
where $Q_y=\{z: |z_i-y_i|\in\{0,1\}~ \forall i\in\{1,\dots,d\}\}$ and $C^{x}_y=\{y: |y-x|\leq{l},~Q_y\subset \mathbb T_N^d\}$. We denote by
$\mathcal{E}_{x,l}$ the irreducible set which contains $\mathcal{Q}_{x,l}$ (and all configurations that can be connected via an allowed path to
one in  $\mathcal{E}_{x,l}$) and we split the integral that appears in the statement of the Lemma into
\begin{equation} \label{oneblockintegralergodic}
\int\frac{1}{N^d}\sum_{x\in{\mathbb{T}_{N}^d}}\tau_{x}V_{l,\psi}(\eta)1_{\{\mathcal{E}_{x,l}\}}(\eta)f^{N,P}_{t}(\eta)\nu_{\alpha}(d\eta)
\end{equation}
\begin{equation} \label{oneblockintegralnotergodic}
+\int\frac{1}{N^d}\sum_{x\in{\mathbb{T}_{N}^d}}\tau_{x}V_{l,\psi}(\eta)1_{\{\mathcal{E}_{x,l}^{c}\}}(\eta)f^{N,P}_{t}(\eta)\nu_{\alpha}(d\eta).
\end{equation}
Note that the probability of the ergodic set $\mathcal{E}_{x,l}$ converges rapidly to one with $l$, indeed the following holds:
\begin{equation}
\label{goodproba}
\nu^N_{\rho(s,.)}(\mathcal{E}_{x,l})\geq 1-(1-\delta_{0}^{2^d})^{l^d}
\end{equation}
where we used  the fact that
the initial profile is bounded away from zero, namely $\rho(s,u)\geq{\delta_{0}}$  $\forall{u\in{\mathbb{T}^d}}$.
Since $H(\mu^{N}/\nu_{\alpha})=O(N^d)$ and the entropy decreases with time, it holds that $H(f^{N,P}_{t})=O(N^d)$. This
implies that the Dirichlet form of $f^{N,P}_{t}$ is bounded above by $CN^{d-2}$.
For the term (\ref{oneblockintegralergodic}), since we are restricting on an irreducible set, we can repeat the standard arguments of the One-block estimate and conclude:
\begin{equation*}
\limsup_{l\rightarrow{+\infty}}\\\limsup_{N\rightarrow{+\infty}}\\\\\sup_{D_{P}(f,\nu_{\alpha})\leq{CN^{d-2}}}
\\\int\frac{1}{N^d}\sum_{x\in{\mathbb{T}_{N}^d}}\tau_{x}V_{l,\psi}(\eta)1_{\{\mathcal{E}_{x,l}\}}(\eta)f(\eta)\nu_{\alpha}(d\eta)=0.
\end{equation*}
We now deal with the term (\ref{oneblockintegralnotergodic}) and, to keep notation simple, we drop the integral with respect to time. The
entropy inequality allows to bound it above by
\begin{equation*}
\frac{H\Big(\mu^{N}_{s}/\nu^N_{\rho(s,u)}\Big)}{\gamma N^d}+\frac{1}{\gamma N^d}\log \int \exp\Big\{\gamma
\sum_{x\in{\mathbb{T}_{N}^d}}\tau_{x}V_{l,\psi}(\eta)1_{\{\mathcal{E}_{x,l}^c\}}(\eta)\Big\}\nu^N_{\rho(s,.)}(d\eta),
\end{equation*}
for every $\gamma>0$.

The term on the left hand side is $H_{N}(s)/\gamma N^d$. On the other hand, since $V_{l,\psi}$ is bounded and by H\"{o}lder inequality, the
second term of last expression can be bounded by
\begin{equation*}
\frac{1}{N\gamma (2l+1)^d}\sum_{x\in{\mathbb{T}^d_{N}}}\log \int \exp\Big\{\gamma (2l+1)^d
C1_{\{\mathcal{E}_{x,l}^c\}}(\eta)\Big\}\nu^N_{\rho(s,.)}(d\eta)
\end{equation*}
\begin{equation*}
=\frac{1}{N\gamma (2l+1)^d}\sum_{x\in{\mathbb{T}^d_{N}}}\log \Big(\nu^N_{\rho(s,.)}(\mathcal{E}_{x,l}^c)(\exp\{\gamma(2l+1)^d C\}-1)+1\Big).
\end{equation*}

By using the upper bound on $\nu^N_{\rho(s,.)}(\mathcal{E}_{x,l}^c)$ which follows from (\ref{goodproba}) and the inequality $\log(x+1)\leq{x}$
which holds true $\forall{x}$, we can bound from above last expression  by
\begin{equation*}
\frac{1}{\gamma (2l+1)^d}(\exp\{\gamma(2l+1)^dC\}-1)(1-\delta_{0}^{2^d})^{l^d},
\end{equation*}
which vanishes as $l\rightarrow{+\infty}$ provided $2^d\gamma+\log(1-\delta_0^{2^d})<0$.
\end{proof}
\section{The Entropy Method}
\label{entropy}
Now, we prove Theorem \ref{th:hlrm}. The strategy of the proof is the same as given for the Zero-Range process in \cite{K.L.}. The main step is
the derivation of the One-Block and the Two-Blocks estimate, which are presented in the fifth section.

Fix a time $T>0$. Let $\mathcal{M}_{+}$ be the space of finite positive measures on $\mathbb{T}^d$ endowed with the weak topology. Consider a
sequence of probability measures $(Q_{N})_{N}$ on $D([0,T],\mathcal{M}_{+})$ corresponding to the Markov process $\pi_{t}^{N}$ as defined in
(\ref{eq:empirical}), starting from $\mu^{N}$.

First we prove that $(Q_{N})_{N}$ is a tight sequence. Then we prove the uniqueness of a limit point, by showing that the limit points of
$(Q_{N})_{N}$ are concentrated on trajectories of measures absolutely continuous with respect to the Lebesgue measure, equal to $\rho_0(u)du$ at
the initial time and whose density is concentrated on weak solutions of the hydrodynamic equation (\ref{eq:porous m}). By the uniqueness of
these solutions we conclude that $\pi_t^N$ has a unique limit point, concentrated on the trajectory with density $\rho(t,u)$, where $\rho(t,u)$
is the weak solution of equation (\ref{eq:porous m}).

We divide the proof in several steps, to make the exposition clearer. Fix a smooth function $H:\mathbb{T}^d\rightarrow{\mathbb{R}}$. Recall the
definition of the empirical measure in (\ref{eq:empirical}) and let
\begin{equation*}
<\pi_{t}^{N},H>=\frac{1}{N^{d}}\sum_{x\in{\mathbb{T}_{N}^d}}H\Big(\frac{x}{N}\Big)\eta_{t}(x).
\end{equation*}
By lemma A1.5.1 of \cite{K.L.}
\begin{equation*}
M^{N,H}_{t}=<\pi_{t}^{N},H>-<\pi_{0}^{N},H>-\int^{t}_{0}N^{2}\mathcal{L}_{\theta}^N<\pi_{s}^{N},H>ds
\end{equation*}
is a martingale with respect to the filtration $\mathcal{F}_{t}=\sigma(\eta_{s}, s\leq{t})$, whose quadratic variation is given by
\begin{equation} \label{quadraticvariation}
\int^{t}_{0}\frac{1}{N^{2d}}\sum_{x\in{\mathbb{T}_{N}^d}}\sum_{j=1}^{d}\Big(\partial^{N}_{u_{j}}H\Big(\frac{x}{N}\Big)\Big)^{2}\tau_{x}g^N_{j}(\eta_{s})ds,
\end{equation}
where
\begin{equation*}
\partial^{N}_{u_{j}}H\Big(\frac{x}{N}\Big)=N\Big[H\Big(\frac{x+e_{j}}{N}\Big)-H\Big(\frac{x}{N}\Big)\Big],
\end{equation*}
$g^N_{j}(\eta)=g_{j}(\eta)+(\eta(0)-\eta(e_{j}))^{2}N^{\theta-2}$ and $g_{j}(\eta)$ as defined in (\ref{function g_{j}}).

By elementary computations we can rewrite the integral part of the martingale as
\begin{equation} \label{integralpartofmartingale}
\int^{t}_{0}\frac{1}{N^d}\sum_{x\in{\mathbb{T}_{N}^d}}\sum_{j=1}^{d}\partial^{2}_{u_{j}}H\Big(\frac{x}{N}\Big)\tau_{x}h^N_{j}(\eta_{s})ds,
\end{equation}
where
\begin{equation*}
\partial^2_{u_{j}}H\Big(\frac{x}{N}\Big)=N^{2}\Big[H\Big(\frac{x+e_{j}}{N}\Big)+H\Big(\frac{x-e_{j}}{N}\Big)-2H\Big(\frac{x}{N}\Big)\Big]
\end{equation*}
and $h^N_{j}(\eta)=h_{j}(\eta)+N^{\theta-2}\eta(0)$ and $h_{j}(\eta)$ as defined in (\ref{function h_{j}}).
\subsection{Relative Compactness}

\quad\

 Following the same arguments as for the Zero-range in \cite{K.L.} it is easy to show that $(Q_{N})_{N}$ is tight.

\subsection{Uniqueness of Limit Points}
\quad \vspace{0.1cm}

At first we note that all limit points $Q$ of $(Q_{N})_{N}$ are concentrated on absolutely continuous measures with respect to the Lebesgue
measure since there is at most one particle per site. The limit points are equal to $\rho_{0}(u)du$ at the initial time by the hypothesis of
$(\mu^{N})_N$ being associated to the profile $\rho_{0}$. But, in order to prove that the limit points are concentrated on weak solutions of
equation (\ref{eq:porous m}), we need to write the integral part of the martingale (\ref{integralpartofmartingale}) as a function of the
empirical measure. This is the main difficulty in the proof of an hydrodynamical limit for a gradient system and we state it as a lemma:
\begin{lemma}{(Replacement Lemma)} \label{replacement lemma}
\\
For every $\delta>{0}$ and every local function $\psi$
\begin{equation*}
\limsup_{\epsilon\rightarrow{0}}\limsup_{N\rightarrow{+\infty}}\mathbb{P}_{\mu^{N}}
\Big[\int_{0}^{T}\frac{1}{N^d}\sum_{x\in{\mathbb{T}_{N}^d}}\tau_{x}V_{\epsilon N,\psi}(\eta_{s})ds\geq{\delta}\Big]=0,
\end{equation*}
where $V_{l,\psi}$ was defined in (\ref{eq:Vreplacement}).
\end{lemma}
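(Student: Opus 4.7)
The plan is to run the classical two-step program of Guo, Papanicolaou and Varadhan \cite{G.P.V.}, with the key modification that the ergodicity on canonical ensembles---normally supplied by the full generator---will come here from the Symmetric Simple Exclusion perturbation $N^{\theta-2}\mathcal{L}_S$ alone, bypassing the irreducibility failure of $\mathcal{L}_P$. First I would apply Markov's inequality, the entropy inequality (which gives $H(\mu^N|\nu_\alpha)=O(N^d)$ since $\nu_\alpha$ is a Bernoulli product measure with density bounded away from $0$ and $1$ on the finite state space $\chi_d^N$) and the Feynman--Kac formula to bound the probability in the statement by
\begin{equation*}
\frac{C}{\gamma}+\frac{T}{\gamma N^d}\,\sup_{f}\Big\{\gamma\int\sum_{x\in\mathbb{T}_N^d}\tau_x V_{\epsilon N,\psi}(\eta)\,f(\eta)\,\nu_\alpha(d\eta)-N^2\,\mathfrak{D}_\theta(\sqrt f,\nu_\alpha)\Big\},
\end{equation*}
the supremum running over densities $f$ with respect to $\nu_\alpha$. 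Choosing $\gamma$ large, the lemma reduces to proving, for every finite $C$,
\begin{equation*}
\lim_{\epsilon\downarrow 0}\,\limsup_{N\to\infty}\,\sup_{\mathfrak{D}_\theta(\sqrt f,\nu_\alpha)\leq CN^{d-2}}\int\frac{1}{N^d}\sum_{x\in\mathbb{T}_N^d}\tau_x V_{\epsilon N,\psi}(\eta)\,f(\eta)\,\nu_\alpha(d\eta)=0.
\end{equation*}

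Since $\mathfrak{D}_\theta=\mathfrak{D}_P+N^{\theta-2}\mathfrak{D}_S$ with both summands non-negative, the constraint forces $\mathfrak{D}_S(\sqrt f,\nu_\alpha)\leq CN^{d-\theta}$. I would then introduce a mesoscopic scale $l$ and, via the triangle inequality, decompose $V_{\epsilon N,\psi}$ into a microscopic \emph{One-Block} piece $V_{l,\psi}$ (plus spatial averages thereof) and a \emph{Two-Blocks} piece $|\tilde\psi(\eta^l(0))-\tilde\psi(\eta^{\epsilon N}(0))|$. For One-Block I would partition $\mathbb{T}_N^d$ into disjoint boxes $B$ of side $2l+1$ and invoke, on each canonical ensemble inside $B$, the Quastel--Yau SSEP spectral gap $\lambda_{B,k}\geq c/l^2$ (uniform in the particle number $k$). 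By Rayleigh--Ritz this controls the integral of $V_{l,\psi}\,f$ minus its canonical expectation by $(l^2/c)$ times the local SSEP Dirichlet form; summing over boxes and dividing by $N^d$ gives an error of order $l^2/N^\theta$, which vanishes as $N\to\infty$ since $\theta>0$. The surviving canonical expectation $\int V_{l,\psi}\,d\nu_{B,k}$ then vanishes as $l\to\infty$ by the equivalence of ensembles applied to the uniform measure $\nu_{B,k}$, making no use of $\mathcal{L}_P$.

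For the Two-Blocks estimate I would compare two $l$-boxes at macroscopic distance $\leq\epsilon N$ by a standard path argument: any relevant configuration change is produced by a sequence of $O(\epsilon N)$ nearest-neighbour SSEP swaps, each bond contributing one term to $\mathfrak{D}_S$. Combined with the spectral gap on $l$-boxes, this yields after summation and division by $N^d$ an error which vanishes in the iterated limit $N\to\infty$, $l\to\infty$, $\epsilon\to 0$, again thanks to $\theta>0$. The main obstacle I foresee is the careful bookkeeping of the $N^{\theta-2}$ slowdown: the SSEP Dirichlet form bound $\mathfrak{D}_S\leq CN^{d-\theta}$ is weaker by a factor $N^{2-\theta}$ than in the unperturbed case, and one must verify that each error term produced by the One- and Two-Blocks estimates is still absorbed by the diffusive rescaling---it is here that the hypothesis $0<\theta<2$ from (\ref{eq:sumgenerator}) is used in a crucial way.
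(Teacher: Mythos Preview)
Your reduction to the static variational problem and your One-Block argument are correct and coincide with the paper's: the bound $\mathfrak{D}_S(\sqrt f,\nu_\alpha)\leq CN^{d-\theta}$ is indeed enough to run the standard One-Block proof, since only a fixed box of size $l$ is involved and the error $O(l^2/N^\theta)$ vanishes as $N\to\infty$.

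The Two-Blocks step, however, does \emph{not} go through with SSEP swaps alone, and this is precisely the obstacle you flag but do not resolve. Writing the exchange $\eta^{0,y}$ with $|y|\leq 2\epsilon N$ as a telescopic sum of $O(\epsilon N)$ nearest-neighbour SSEP moves and applying Cauchy--Schwarz yields, for the translation-averaged density $\bar f$,
\[
\int\big(\sqrt{\bar f(\eta^{0,y})}-\sqrt{\bar f(\eta)}\big)^2\,d\nu_\alpha
\;\leq\; C(\epsilon N)\sum_{z\text{ on path}} I^S_{z,z+1}(\bar f,\nu_\alpha)
\;\leq\; C(\epsilon N)^2\,\frac{\mathfrak{D}_S(\sqrt{\bar f},\nu_\alpha)}{N^d}
\;\leq\; C\,\epsilon^2 N^{2-\theta},
\]
which diverges as $N\to\infty$ for every $\theta<2$. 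The slowdown of the exclusion part is therefore fatal over paths of macroscopic length.

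The paper circumvents this by using $\mathcal L_P$---not $\mathcal L_S$---for the long-range transport, exploiting that $\mathfrak{D}_P(\sqrt f,\nu_\alpha)\leq CN^{d-2}$ carries the full diffusive factor. Since the constrained rates $c(x,x+1,\eta)$ may vanish, one first performs a small-density cutoff (ensuring at least $2^d+1$ particles in the $l$-box), then uses $O(l)$ SSEP moves \emph{inside the box} to assemble a mobile $2^d$-cluster, and finally uses $O(\epsilon N)$ moves of $\mathcal L_P$---all with rate $\geq 1$ thanks to the mobile cluster---to carry the particle from $0$ to $y$ (Lemma~\ref{pathlemma}). This splits the path cost into $C(l)/N^\theta$ from the local SSEP part and $C(l)\epsilon^2$ from the $\mathcal L_P$ part, both of which vanish in the correct order of limits. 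You are missing exactly this hybrid path construction.
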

We postpone the proof to the following section in order to make the exposition more clear. To keep notation simple, in the previous statement
and hereafter we write $\epsilon N$ for $[\epsilon N]$, it's integer part.

Lemma \ref{replacement lemma} means that
\begin{equation*}
\int_{0}^{t}\frac{1}{N^d}\sum_{x\in{\mathbb{T}_{N}^d}}\sum_{j=1}^{d}H\Big(s,\frac{x}{N}\Big)\Big\{\tau_{x}h_{j}(\eta_{s})-\tilde{h}(\eta_{s}^{\epsilon
N}(x))\Big\}ds
\end{equation*}
vanishes in probability as $N\rightarrow{+\infty}$ and then as $\epsilon\rightarrow{0}$, for every continuous function $H$.
This is the replacement that permits to close the integral part of the martingale in terms of the empirical measure. For details we refer the reader to
chapter 5 of \cite{K.L.}.
\section{Replacement Lemma}
\label{replacement}
This section is devoted to the proof of Lemma \ref{replacement lemma}. Before introducing the proof we give some fundamental inequalities that
are used in the sequel.

Fix a density $\alpha$ and an invariant measure $\nu_{\alpha}$, as reference. Let $(\mu^{N})_{N}$ be a sequence of probability measures on
$\chi_{d}^N$ and denote by $S_{t}^{N}$ the semigroup associated to the generator $\mathcal{L}_{\theta}^N$. Denote by $f_{t}^{N}$ the density of
$\mu^{N}S_{t}^{N}$ with respect to $\nu_{\alpha}$. As the process is evolving on a torus we have that
\begin{equation*}
H(\mu^{N}/\nu_{\alpha})\leq{C(\alpha)N^d},
\end{equation*}
and since the entropy decreases with time we obtain that $H(\mu^{N}S^{N}_{t}/\nu_{\alpha}^{N})\leq{C(\alpha)N^d}$, see Proposition A1.9.2 of
\cite{K.L.}. Denote by $(\mathcal{L}_{\theta}^N)^{*}$ the adjoint operator of $\mathcal{L}_{\theta}^N$ in $L^{2}(\nu_{\alpha})$. Since the
process is reversible with respect to $\nu_{\alpha}$, $f_{t}^{N}$ satisfies the following equality:
\begin{equation*}
\partial_{t}f_{t}^{N}=N^2(\mathcal{L}_{\theta}^N)^{*}f_{t}^{N}=N^2\mathcal{L}_{\theta}^Nf_{t}^{N}
\end{equation*}
with initial condition $f_{0}^{N}=\frac{d\mu^{N}}{d\nu_{\alpha}^{N}}$.

For $f:\chi_{d}^N\rightarrow{\mathbb{R}_{+}}$, recall the definition of the Dirichlet form
$D(f,\nu_{\alpha})=\mathfrak{D}(\sqrt{f},\nu_{\alpha})$ and $\mathfrak{D}(f,\nu_{\alpha})$ defined in (\ref{dirichlet form}). Since there exists
a constant $C$, such that $H_{N}(f_{0}^{N})\leq{CN^d}$, it can be proved that for all $t>{0}$ (see \cite{K.L.}, section 5.2), that
\begin{equation} \label{ergodic estimates}
H_{N}\Big(\frac{1}{t}\int_{0}^{t}f_{s}^{N}ds\Big)\leq{CN^d}, \\ \quad\
D_{\theta}\Big(\frac{1}{t}\int_{0}^{t}f_{s}^{N}ds,\nu_{\alpha}\Big)\leq{\frac{CN^{d-2}}{2t}}.
\end{equation}

The proof of the Replacement Lemma relies on the well-known One-block and Two-blocks estimates. At first we reduce the dynamical problem, since
the function depends on a trajectory, to a static one using the estimates obtained above. For that, we need to introduce some notation.

Let $\mu^{N}(T)$ be the Cesaro mean of $\mu^{N}S_{t}^{N}$, namely:
\begin{equation*}
\mu^{N}(T)=\frac{1}{T}\int_{0}^{T}\mu^{N}S_{t}^{N}dt
\end{equation*}
and $\bar{f}_{T}^{N}$ the Radon-Nikodym density of $\mu^{N}(T)$ with respect to $\nu_{\alpha}$. Recall that in the beginning of this section we
have obtained estimates for the entropy and the Dirichlet form of $\bar{f}_{T}^{N}$. As a consequence, to prove the Replacement Lemma it is
enough to show that
\begin{equation} \label{eq:replacement}
\limsup_{\epsilon\rightarrow{0}}\limsup_{N\rightarrow{+\infty}}\sup_{D_{\theta}(f,\nu_{\alpha})\leq{CN^{d-2}}}\int
\frac{1}{N^d}\sum_{x\in{\mathbb{T}_{N}^d}}\tau_{x}V_{\epsilon N,\psi}(\eta)f(\eta)\nu_{\alpha}(d\eta)=0,
\end{equation}
for every $C<{\infty}$. This is a consequence of the following two results:
\begin{lemma}({One-block Estimate}) \label{th:oneblock}
\\
For every finite constant $C$,
\begin{equation*}
\limsup_{l\rightarrow{+\infty}}\\\limsup_{N\rightarrow{+\infty}}\\\sup_{D_{\theta}(f,\nu_{\alpha})\leq{CN^{d-2}}}
\\\int\frac{1}{N^d}\sum_{x\in{\mathbb{T}_{N}^d}}\tau_{x}V_{l,\psi}(\eta)f(\eta)\nu_{\alpha}(d\eta)=0,
\end{equation*}
where $V_{l,\psi}$ was defined in (\ref{eq:Vreplacement}).
\end{lemma}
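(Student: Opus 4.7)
The plan is to follow the classical Guo-Papanicolaou-Varadhan one-block scheme (see \cite{K.L.}, Chapter 5), exploiting the key simplification that the added SSEP term $N^{\theta-2}\mathcal{L}_S$ in the generator $\mathcal{L}_\theta^N$ renders the dynamics irreducible on every hyperplane of any finite box $\Lambda_l=\{-l,\dots,l\}^d$, so that, in contrast with Lemma~\ref{th:oneblockmmp}, no decomposition into ergodic components is needed. The starting observation is that $D_\theta=D_P+N^{\theta-2}D_S$, so the hypothesis yields the SSEP Dirichlet form bound $D_S(f,\nu_\alpha)\le CN^{d-\theta}$, which remains informative for every $\theta>0$.

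First I would symmetrize: by translation invariance of $\nu_\alpha$, the integral equals $\int V_{l,\psi}(\eta)\bar f(\eta)\,\nu_\alpha(d\eta)$ with $\bar f=N^{-d}\sum_{x\in\mathbb{T}_N^d}\tau_x f$, and by convexity $D_S(\bar f,\nu_\alpha)\le D_S(f,\nu_\alpha)\le CN^{d-\theta}$. Since $V_{l,\psi}$ is measurable with respect to $\mathcal{F}_l:=\sigma(\eta(x):x\in\Lambda_l)$, I further replace $\bar f$ by the conditional expectation $f_l:=E_{\nu_\alpha}[\bar f\mid\mathcal{F}_l]$, which can only decrease the SSEP Dirichlet form restricted to bonds inside $\Lambda_l$. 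A standard convexity-plus-translation computation gives that, on each nearest-neighbor bond $(x,x+e_j)\subset\Lambda_l$, the $\nu_\alpha$-expectation of $[\bar f(\eta^{x,x+e_j})-\bar f(\eta)]^2$ is at most $N^{-d}D_S(f,\nu_\alpha)\le CN^{-\theta}$; summing over the $O(l^d)$ such bonds produces a box SSEP Dirichlet form of order $Cl^dN^{-\theta}$, which vanishes as $N\to\infty$ with $l$ fixed.

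Next I would decompose $\nu_\alpha$ on $\mathcal{F}_l$ as the mixture $\sum_k\nu_\alpha(\Sigma_{l,k})\,\nu_{l,k}$ over the uniform measures on the hyperplanes $\Sigma_{l,k}\subset\{0,1\}^{\Lambda_l}$. Since SSEP on $\Lambda_l$ is irreducible on each $\Sigma_{l,k}$ with spectral gap bounded below by $c/l^2$ uniformly in $k$, the Poincar\'e inequality applied to the conditional density $f_{l,k}$ of $f_l$ on $\Sigma_{l,k}$ gives
\begin{equation*}
\int(f_{l,k}-1)^2\,d\nu_{l,k}\le\frac{l^2}{c}\,D_{S,l}(f_{l,k},\nu_{l,k}).
\end{equation*}
Averaging over $k$ against $\nu_\alpha(\Sigma_{l,k})$ and invoking the previous bound shows that $f_l$ becomes, in $L^2(\nu_\alpha)$, constant on each hyperplane as $N\to\infty$ with $l$ fixed. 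By Cauchy-Schwarz, the lemma then reduces to showing $\sum_k\nu_\alpha(\Sigma_{l,k})\bigl|\int V_{l,\psi}\,d\nu_{l,k}\bigr|\to 0$ as $l\to\infty$.

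Finally this is the equivalence of ensembles: under $\nu_{l,k}$ with $k/(2l+1)^d\to\rho$, the weak law of large numbers on $\Lambda_l$ forces both $\eta^l(0)\to\rho$ and $|\Lambda_l|^{-1}\sum_{|y|\le l}\tau_y\psi\to\tilde\psi(\rho)$ in probability, so $\int V_{l,\psi}\,d\nu_{l,k}\to 0$ uniformly in the density $k/(2l+1)^d\in[0,1]$, completing the argument. The main obstacle is the convexity-and-translation estimate in step two that converts the global bound $D_S(f,\nu_\alpha)\le CN^{d-\theta}$ into a vanishing Dirichlet form on the finite box $\Lambda_l$; this is precisely where the perturbation scaling $N^{\theta-2}$ with $\theta>0$ is used, while all other ingredients follow \cite{K.L.} essentially verbatim.
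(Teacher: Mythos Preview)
Your proposal is correct and follows essentially the same strategy as the paper: both extract the bound $D_S(f,\nu_\alpha)\le CN^{d-\theta}$ from $D_\theta=D_P+N^{\theta-2}D_S$ and then run the standard GPV one-block argument for SSEP, the key point in each case being that the induced box Dirichlet form satisfies $D_S^l(f_l,\nu_\alpha)\le C(l)N^{-\theta}$, which still vanishes as $N\to\infty$ since $\theta>0$. The only cosmetic difference is that you close the argument via the SSEP spectral gap and a Poincar\'e inequality (which, strictly speaking, controls $\mathrm{Var}(\sqrt{f_{l,k}})$ rather than $\int(f_{l,k}-1)^2$, but this is harmless), whereas the paper simply refers to \cite{K.L.}, where the corresponding step is done by compactness and the characterization of densities with zero Dirichlet form.
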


\begin{lemma}({Two-blocks Estimate}) \label{th:twoblocks}
\\
For every finite constant $C$,
\begin{equation*}
\limsup_{l\rightarrow{+\infty}}\\\limsup_{\epsilon\rightarrow{0}}\\\limsup_{N\rightarrow{+\infty}}
\\\sup_{D_{\theta}(f,\nu_{\alpha})\leq{CN^{d-2}}}\\\sup_{|y|\leq{\epsilon N}}
\end{equation*}
\begin{equation} \label{eq:twoblocks}
\int\frac{1}{N^d}\sum_{x\in{\mathbb{T}_{N}^d}}\Big|\eta^{l}(x+y)-\eta^{\epsilon N}(x)\Big|f(\eta)\nu_{\alpha}(d\eta)=0.
\end{equation}
\end{lemma}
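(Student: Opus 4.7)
The plan is to adapt the classical Two-blocks argument (chapter 5 of \cite{K.L.}) to the perturbed dynamics, exploiting the SSEP component of the generator to provide particle mobility between two distant boxes; the pure $\mathcal{L}_P$-dynamics is far too degenerate to transport particles over long distances, so the perturbation $N^{\theta-2}\mathcal{L}_S$ is essential here.

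The starting observation is that $\mathfrak{D}_{\theta}(\sqrt{f},\nu_{\alpha}) = \mathfrak{D}_{P}(\sqrt{f},\nu_{\alpha}) + N^{\theta-2}\mathfrak{D}_{S}(\sqrt{f},\nu_{\alpha})$ with both summands non-negative, so the hypothesis $D_{\theta}(f,\nu_{\alpha})\leq CN^{d-2}$ yields the SSEP Dirichlet bound $D_{S}(f,\nu_{\alpha})\leq CN^{d-\theta}$. With this bound in hand, the problem is reduced to a Two-blocks estimate for an SSEP-like process. A first telescoping shows that $\eta^{\epsilon N}(x)$ agrees with $(2\epsilon N+1)^{-d}\sum_{|z|\leq \epsilon N}\eta^{l}(x+z)$ up to a boundary term of order $l/(\epsilon N)$, which vanishes in the prescribed order of limits. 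It is therefore enough to bound
\begin{equation*}
\int \frac{1}{N^d(2\epsilon N+1)^d}\sum_{x\in\mathbb{T}_{N}^{d}}\sum_{|z|\leq \epsilon N}\bigl|\eta^{l}(x+y)-\eta^{l}(x+z)\bigr|\,f(\eta)\,\nu_{\alpha}(d\eta).
\end{equation*}

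For each $(x,y,z)$ I would compare the boxes $B_{l}(x+y)$ and $B_{l}(x+z)$ by introducing an enlarged configuration space on pairs $(\eta,\xi)$, equipped with direct box-to-box exchanges between corresponding sites of the two boxes in addition to the internal SSEP jumps. A Cauchy--Schwarz path argument, decomposing each box-to-box exchange into a chain of at most $|y-z|\leq \epsilon N$ nearest-neighbour SSEP jumps, bounds the Dirichlet form of the averaged density on the enlarged space in terms of $D_{S}(f,\nu_{\alpha})$. The spectral gap of SSEP on the enlarged two-box cube (of order $l^{-2}$), combined with the equivalence of ensembles, then reduces the problem to the standard computation that the mean of $|\eta^{l}(0)-\xi^{l}(0)|$ under the enlarged measure vanishes as $l\to\infty$.

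The main technical obstacle is to carry through this bookkeeping with the weaker SSEP bound $CN^{d-\theta}$ — a factor $N^{2-\theta}$ worse than the usual diffusive $CN^{d-2}$. One must verify that all error terms arising from the path cost, the spectral gap, and the equivalence of ensembles still vanish in the order $N\to\infty$, then $\epsilon\to 0$, then $l\to\infty$, uniformly for every $\theta\in(0,2)$. The room for this comes from the fact that the path cost contributes a factor $\epsilon N \cdot D_{S}(f,\nu_{\alpha})/(N^{d}l^{d-1})$ whose smallness in $\epsilon$ and $l$ compensates the loss $N^{2-\theta}$; once this quantitative check is completed, the remainder of the argument is essentially verbatim from chapter 5 of \cite{K.L.}.
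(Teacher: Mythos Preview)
Your plan has a genuine gap at the box-to-box coupling step. If you decompose the exchange $\eta^{0,y}$ into a chain of $|y|\leq 2d\epsilon N$ nearest-neighbour SSEP moves and apply Cauchy--Schwarz, the connecting Dirichlet term for the translation-averaged density $\bar f$ is bounded by
\begin{equation*}
|y|\sum_{k=1}^{|y|} I^{S}_{x_k,x_{k+1}}(\bar f,\nu_\alpha)\;\leq\; C(\epsilon N)^{2}\,\frac{D_{S}(\bar f,\nu_\alpha)}{N^{d}}\;\leq\; C\,\epsilon^{2}\,N^{2-\theta},
\end{equation*}
which diverges as $N\to\infty$ for every $\theta<2$. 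The expression you quote, $\epsilon N\cdot D_{S}(f,\nu_\alpha)/(N^{d}l^{d-1})$, is not what the path argument produces: the long path between the two boxes never sees the parameter $l$, and Cauchy--Schwarz on a telescopic sum of length $|y|$ gives a factor $|y|^{2}\sim(\epsilon N)^{2}$, not $\epsilon N$. So the ``quantitative check'' you single out as the main technical obstacle actually fails, and the slowed-down SSEP component by itself cannot close the Two-blocks estimate for $\theta<2$.

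The paper's proof uses $\mathcal{L}_{P}$, not $\mathcal{L}_{S}$, for the long-distance transport. One first cuts off the low-density event $\Omega_{0,y}=\{\sum_{|z|\leq l}\eta(z)\leq 2^{d}\}\cap\{\sum_{|z-y|\leq l}\eta(z)\leq 2^{d}\}$, whose contribution is trivially $O(l^{-d})$. On the complement one of the two $l$-boxes contains at least $2^{d}+1$ particles; with $O(l)$ SSEP moves one assembles $2^{d}$ of them into a mobile hypercube adjacent to the origin, and then one shuttles this mobile cluster together with the particle to be exchanged from $0$ to $y$ and back using $O(\epsilon N)$ moves of $\mathcal{L}_{P}$ (these moves are allowed precisely because the cluster is present). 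The $\mathcal{L}_{P}$ portion of the path costs at most $C(d,l)(\epsilon N)^{2}\,D_{P}(\bar f,\nu_\alpha)/N^{d}\leq C(d,l)\epsilon^{2}$, since $D_{P}\leq D_{\theta}\leq CN^{d-2}$ retains the full diffusive scaling; the short SSEP portion costs only $C(d,l)N^{-\theta}$. Both vanish in the limits $N\to\infty$, $\epsilon\to 0$, and the remainder of the argument is then the standard one from \cite{K.L.}. The idea you are missing is that the degenerate generator $\mathcal{L}_{P}$, once a mobile cluster has been manufactured locally, is the right vehicle for the $O(\epsilon N)$-distance transport; the perturbation $N^{\theta-2}\mathcal{L}_{S}$ is only strong enough to handle rearrangements on scale $l$.
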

\subsection{One-Block Estimate}

\quad \vspace{0.1cm}

Since in this case we are considering the perturbed process $\mathcal{L}_{\theta}^N$, we can use the One-Block estimate for the SSEP speeded up
by $N^\theta$, which is enough to conclude the One-Block estimate for the perturbed process, the idea to proceed is the following. By the
definition of the Dirichlet form (\ref{dirichlet form}) and the generator $\mathcal{L}_{\theta}^N$:
\begin{equation}
D_{\theta}(f,\nu_{\alpha})=D_{P}(f,\nu_{\alpha})+N^{\theta-2}D_{S}(f,\nu_{\alpha}),\label{eq:dirisum}
\end{equation}
where $D_{P}$, (resp. $D_{S}$) denotes the Dirichlet form as defined in (\ref{dirichletformsqrtf}). Since the Dirichlet form is always positive,
and we are restricted to densities $f$ for which $D_{\theta}(f,\nu_{\alpha})\leq{CN^{d-2}}$, it holds that:
\begin{equation*}
D_{S}(f,\nu_{\alpha})\leq{N^{2-\theta} D_{\theta}(f,\nu_{\alpha})}\leq{{C N^{d-\theta}}}
\end{equation*}
Following the same arguments as for the Zero-Range in section $5$ of \cite{K.L.}, it is not hard to show that the modified One-Block estimate
\begin{equation*}
\limsup_{l\rightarrow{+\infty}}\\\limsup_{N\rightarrow{+\infty}}\\\sup_{D_{S}(f,\nu_{\alpha})\leq{CN^{d-\theta}}}
\\\int\frac{1}{N^d}\sum_{x\in{\mathbb{T}_{N}^d}}\tau_{x}V_{l,\psi}(\eta)f(\eta)\nu_{\alpha}(d\eta)=0.
\end{equation*}
holds for the SSEP. The main difference between the proofs, comes from the bounds on the Dirichlet forms. Having the bound
$D_{S}(f,\nu_{\alpha})\leq{CN^{d-2}}$ it provides the estimate $D^{l}_{S}(f_{l},\nu_{\alpha})\leq{C/N^{2}}$, where $f_{l}$ is the conditional
expectation of $f$ with respect to $\sigma$-algebra generated by $\{\eta(x),|x|\leq{l}\}$, while the bound
$D_{S}(f,\nu_{\alpha})\leq{CN^{d-\theta}}$, provides the estimate $D^{l}_{S}(f_{l},\nu_{\alpha})\leq{C/N^{\theta}}$, which is enough to conclude
the standard proof of the One-block estimate, since $\theta>0$.
\subsection{Two-Blocks Estimate}
\quad \vspace{0.1cm}

In order to show (\ref{eq:twoblocks}) it is enough to prove that
\begin{equation*}
\limsup_{l\rightarrow{+\infty}}\\\limsup_{\epsilon\rightarrow{+\infty}}\\\limsup_{N\rightarrow{+\infty}}
\\\sup_{D_{\theta}(f,\nu_{\alpha})\leq{CN^{d-2}}}\\\sup_{2l\leq{|y|\leq{\epsilon N}}}
\end{equation*}
\begin{equation*}
\int\frac{1}{N^d}\sum_{x\in{\mathbb{T}_{N}^d}}|\eta^{l}(x)-\eta^{l}(x+y)|f(\eta)\nu_{\alpha}(d\eta)=0.
\end{equation*}
We can write this integral as
\begin{equation*}
\int|\eta^{l}(0)-\eta^{l}(y)|\bar{f}(\eta)\nu_{\alpha}(d\eta),
\end{equation*}
where $\bar{f}$ denotes the average of all space translations of $f$.

In this case, we are able to separate the integral over configurations with small density of particles from the ones with high density. The
first case is easily treated. For remaining, since we are taking configurations with high density of particles, we are able to construct a
coupled process, in which each marginal evolves according to the SSEP and they are connected by jumps of $\mathcal{L}_{P}$. The main difference
between the proof for the Zero-Range Process in \cite{K.L.} comes from the estimate in the Dirichlet form of this process, which in this case we
are able to show that is bounded by $C(d,l)\epsilon^\theta$ and is enough to conclude. We divide the proof in several steps, in order to make
the exposition clearer.

\subsubsection{\textbf{Cut off of small densities}}
\quad \vspace{0.1cm}

At first we note that if we restrict last integral to the small density set
\begin{equation*}
\Omega_{0,y}=\{\eta:\sum_{|x|\leq{l}}\eta(x)\leq{2^d}\}\cap{\{\eta:\sum_{|x-y|\leq{l}}\eta(x)\leq{2^d}\}},
\end{equation*}
it is bounded above by $\frac{C(d)}{(2l+1)^d}$, which vanishes as $l\rightarrow{+\infty}$. So, in fact we just have to consider the integral
over the complementary set $\Omega^{c}_{0,y}$.
\subsubsection{\textbf{Reduction to microscopic cubes}}
\quad \vspace{0.1cm}

Here we need to introduce some notation. Fix a positive integer $l$ and a site $x$, denote by $\wedge_{l}(x)$ the box centered at $x$ with
radius $l$. Let $\chi^{2,l}$ denote the configuration space $\{0,1\}^{\wedge_{l}(0)}\times{\{0,1\}^{\wedge_{l}(0)}}$, $\xi=(\xi_{1},\xi_{2})$
the configurations of $\chi^{2,l}$ and by $\nu_{\alpha}^{2,l}$ the product measure $\nu_{\alpha}$ restricted to $\chi^{2,l}$. Denote by
$f_{y,l}$ the conditional expectation of $f$ with respect to the $\sigma$-algebra generated by $\{\eta(z),z\in{\wedge_{l}(0,y)}\}$, where
\begin{equation*}
\wedge_{l}(0,y)=\wedge_{l}(0)\cup{\wedge_{l}(y)}.
\end{equation*}
Since, $\eta^{l}(0)$ and $\eta^{l}(y)$ depend on $\eta(x)$, for $x\in{\wedge_{l}(0,y)}$, we are able to replace $\bar{f}$ by $\bar{f}_{y,l}$ and
rewrite our desired limit as
\begin{equation*}
\limsup_{l\rightarrow{+\infty}}\\\limsup_{\epsilon\rightarrow{0}}\\\limsup_{N\rightarrow{+\infty}}
\\\sup_{D_{\theta}(f,\nu_{\alpha})\leq{CN^{d-2}}}\\\sup_{2l<|y|\leq{2\epsilon N}}
\end{equation*}
\begin{equation*}
\int_{\Omega^{c}_{0,y}}|\xi_{1}^{l}(0)-\xi_{2}^{l}(0)|\bar{f}_{y,l}(\xi)\nu_{\alpha}^{2,l}(d\xi)=0.
\end{equation*}
\subsubsection{\textbf{Estimates on the Dirichlet form}}
\quad \vspace{0.1cm}

At first we need to introduce some notation since we are working in the space $\Omega^{c}_{0,y}$. Recall from (\ref{dirichlet form}) the
definition of the Dirichlet form of $f$. Since the Bernoulli product measures are homogeneous, we can define the Dirichlet form of $f$ as:
\begin{equation*}
D_{\theta}(f,\nu_{\alpha})=\sum_{\substack{x,z\in{\mathbb{T}_{N}^{d}}\\|x-z|=1}}I_{x,z}(f,\nu_{\alpha}),
\end{equation*}
where
\begin{equation} \label{relation I}
I^{\theta}_{x,z}(f,\nu_{\alpha})=I_{x,z}^{P}(f,\nu_{\alpha})+N^{\theta-2}I_{x,z}^{S}(f,\nu_{\alpha}),
\end{equation}
\begin{equation*}
I_{x,z}^{P}(f,\nu_{\alpha})=\int_{\Omega^{c}_{0,y}}c(x,z,\eta)\Big[\sqrt{f(\eta^{x,z})}-\sqrt{f(\eta)}\Big]^{2}\nu_{\alpha}(d\eta)
\end{equation*}
and
\begin{equation*}
I_{x,z}^{S}(f)=\int_{\Omega^{c}_{0,y}}\Big[\sqrt{f(\eta^{x,z})}-\sqrt{f(\eta)}\Big]^{2}\nu_{\alpha}(d\eta).
\end{equation*}

By the definition of $\bar{f}$ and since the Dirichlet form is convex, this implies that
$D_{\theta}(\bar{f},\nu_{\alpha})\leq{D_{\theta}(f,\nu_{\alpha})}$.

The main step in the proof, consists in obtaining an upper bound for the Dirichlet form of $\bar{f}_{y,l}$ from the upper bound of the Dirichlet
form of $\bar{f}$, in such a way that we can use the ergodic properties of the Markov process. The idea is to obtain a limit density with
Dirichlet form in $\wedge_{l}(0,y)$ equal to $0$, and then by using the irreducibility we can decompose that density along the hyperplanes. As
the boxes $\wedge_{l}(0)$ and $\wedge_{l}(y)$ have no communication, we cannot define the Dirichlet form in $\wedge_{l}(0,y)$ as the sum of the
Dirichlet forms in $\wedge_{l}(0)$ and in $\wedge_{l}(y)$, we must add a term that connects both boxes. Define on positive densities
$f:\chi^{2,l}\rightarrow{\mathbb{R}_{+}}$:
\begin{equation}
D^{2,l}_{\theta}(f,\nu_{\alpha}^{2,l})=I^{l}_{0,0}(f,\nu_{\alpha}^{2,l})+\sum_{\substack{x,z\in{\wedge_{l}(0)}\\|x-z|=1}}I^{1,l}_{x,z}(f,\nu_{\alpha}^{2,l})+
\sum_{\substack{x,z\in{\wedge_{l}(0)}\\|x-z|=1}}I^{2,l}_{x,z}(f,\nu_{\alpha}^{2,l}), \label{dirichletformoff_{0,y}}
\end{equation}
where
\begin{equation*}
I^{1,l}_{x,z}(f,\nu_{\alpha}^{2,l})=\int_{\Omega^{c}_{0,y}}\Big[\sqrt{f(\xi_{1}^{x,z},\xi_{2})}-\sqrt{f(\xi_{1},\xi_{2})}\Big]^{2}\nu_{\alpha}^{2,l}(d\xi_{1},d\xi_{2}),
\end{equation*}
\begin{equation*}
I^{2,l}_{x,z}(f,\nu_{\alpha}^{2,l})=\int_{\Omega^{c}_{0,y}}\Big[\sqrt{f(\xi_{1},\xi_{2}^{x,z})}-
\sqrt{f(\xi_{1},\xi_{2})}\Big]^{2}\nu_{\alpha}^{2,l}(d\xi_{1},d\xi_{2}).
\end{equation*}
We still have to define the term that connects both boxes, namely $I^{l}_{0,0}(f,\nu_{\alpha}^{2,l})$. Notice that
$\Omega^{c}_{0,y}=\Omega_{0}^c\cup{\Omega_{y}^c}$, where
\begin{equation*}
\Omega_{0}=\{\eta\in{\{0,1\}^{\wedge_{l}(0)}}:\sum_{x\in{\wedge_{l}(0)}}\eta(x)<{2^d+1}\}
\end{equation*}
 and
\begin{equation*}
\Omega_{y}=\{\eta\in{\{0,1\}^{\wedge_{l}(y)}}:\sum_{x\in{\wedge_{l}(y)}}\eta(x)<{2^d+1}\}.
\end{equation*}
 Then we define
\begin{equation*}
I^{l}_{0,0}(f,\nu_{\alpha}^{2,l})=\int_{\Omega_{0}^c}
\sum_{j=1}^{d}c(0,e_{j},\xi_{1})\Big[\sqrt{f(\xi_{1}^{0,-},\xi_{2}^{0,+})}-\sqrt{f(\xi)}\Big]^{2}\nu_{\alpha}^{2,l}(d\xi_{1},d\xi_{2})
\end{equation*}
\begin{equation*}
+\int_{\Omega_{y}^c}\sum_{j=1}^{d}c(0,e_{j},\xi_{2})\Big[\sqrt{f(\xi_{1}^{0,+},\xi_{2}^{0,-})}-\sqrt{f(\xi)}\Big]^{2}\nu_{\alpha}^{2,l}(d\xi_{1},d\xi_{2}),
\end{equation*}
where $\xi_{i}^{0,\pm}=\xi_{i}\pm\partial_{0}$ and $\partial_{0}$ is the configuration with one particle at site ${0}$ and in the rest empty.

This Dirichlet form corresponds to a particle system on $\wedge_{l}(0)\times{\wedge_{l}(0)}$, where the marginal processes evolve as SSEP and
where particles can jump from the origin of one marginal process to the origin of the other and vice-versa, according to the jumps of the
generator $\mathcal{L}_{P}$. Using Schwarz inequality and the definition of $\bar{f}_{y,l}$, we have that:
\[
\begin{array}{cc}
I_{x,z}^{1,l}(\bar{f}_{y,l},\nu_{\alpha}^{2,l})\leq{I_{x,z}^{S}(\bar{f},\nu_{\alpha}^{2,l})} \hspace{0.5cm}and &
I_{x,z}^{2,l}(\bar{f}_{y,l},\nu_{\alpha}^{2,l})\leq{I_{x,z}^{S}(\bar{f},\nu_{\alpha}^{2,l})}
\end{array}
\]
Then, by equality (\ref{relation I}) and the estimate on the Dirichlet form of $f$, it holds that
$D_{S}(\bar{f},\nu_{\alpha})\leq{CN^{d-\theta}}$ which together with $l/N<\epsilon$, implies that:
\begin{equation*}
\sum_{\substack{x,z\in{\wedge_{l}(0)}\\|x-z|=1}}I^{1,l}_{x,z}(f,\nu_{\alpha}^{2,l})+
\sum_{\substack{x,z\in{\wedge_{l}(0)}\\|x-z|=1}}I^{2,l}_{x,z}(f,\nu_{\alpha}^{2,l})\leq{\frac{2(2l+1)^{d-1}2lD_{S}(\bar{f},\nu_{\alpha})}{N^d}}
\end{equation*}
\begin{equation} \label{firstpartofdirichletform}
\leq{C(d,l)N^{-\theta}}
\end{equation}

In last expression $C(d,l)$ is a constant that depends on $d$ and $l$. In what follows it may vary from line to line.

 It remains to obtain an upper bound for $I^{l}_{0,0}(\bar{f}_{y,l})$. By using Schwarz
inequality and the definition of $\bar{f}_{y,l}$, we can bound $I^{l}_{0,0}(\bar{f}_{y,l},\nu_{\alpha}^{2,l})$ by
\begin{equation*}
\int_{\Omega_{0}^c}\sum_{j=1}^{d}c(0,e_{j},\eta)\Big[\sqrt{\bar{f}(\eta^{0,y})}-\sqrt{\bar{f}(\eta)}\Big]^{2}\nu_{\alpha}(d\eta)
\end{equation*}
\begin{equation*}
+\int_{\Omega_{y}^c}\sum_{j=1}^{d}c(y,y+e_{j},\eta)\Big[\sqrt{\bar{f}(\eta^{0,y})}-\sqrt{\bar{f}(\eta)}\Big]^{2}\nu_{\alpha}(d\eta).
\end{equation*}
Let $E^{l,j}_{0,y}(\bar{f},\nu_{\alpha})$ $j=1,2$ denote the first (respectively second) expectation above. In order to keep the proof clear we
are going to estimate $E^{l,1}_{0,y}(f,\nu_{\alpha})$ and state it as a lemma. We note that a similar argument provides the same bound for
$E^{l,2}_{0,y}(f,\nu_{\alpha})$.

\begin{lemma} \label{pathlemma}
Let $f$ be a density such that $D_{\theta}(f,\nu_{\alpha})\leq{CN^{d-2}}$ and let $y\in{\mathbb{T}^d_{N}}:2l<|y|\leq{2\epsilon N}$. Then,
\begin{equation*}
E^{l,1}_{0,y}(f,\nu_{\alpha})\leq{C(d,l)\epsilon^{\theta}}.
\end{equation*}
\end{lemma}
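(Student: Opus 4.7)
The plan is a moving-particle / path argument: construct an explicit sequence of elementary moves taking $\eta$ into $\eta^{0,y}$, then use the triangle inequality in $L^{2}(\nu_{\alpha})$ (equivalently, Cauchy--Schwarz) to bound the squared difference of $\sqrt{\bar{f}}$ at the endpoints by per-edge Dirichlet form contributions along the path, and finally invoke translation invariance of $\bar{f}$ to make each contribution decay in $N$.

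First I would build the path. On $\Omega_{0}^{c}$ the ball of radius $l$ around the origin contains at least $2^{d}+1$ particles, so using a number $L_{S}$ of SSEP-type nearest-neighbour exchanges depending only on $d$ and $l$ (not on $N$ or $y$) one can rearrange these particles into a $d$-dimensional hypercube of side two fully occupied, i.e.\ a mobile cluster sitting next to the origin. Appealing to the non-cooperative character of the rates already exhibited for $d=1$ in Section~\ref{results}, and which extends in the obvious way to the $2^{d}$-cube in general dimension, I would then escort the particle originally at $0$ to site $y$ using only KCLG moves, with the cluster itself acting as the catalyst that keeps the constraint $c(x_{k},x_{k+1},\eta_{k})\ge 1$ satisfied at every step. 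After sliding the cluster back to its starting position and undoing the initial $L_{S}$ SSEP moves, I obtain a path $\eta=\eta_{0},\eta_{1},\dots,\eta_{L}=\eta^{0,y}$ with $L_{S}\le C(d,l)$ SSEP steps and $L_{P}\le C|y|\le 2C\epsilon N$ KCLG steps, after partitioning $\Omega_{0}^{c}$ into finitely many subsets on which the edge sequence is deterministic.

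Next, applying the triangle inequality in $L^{2}(\nu_{\alpha})$ to the telescoping identity
\[
\sqrt{\bar{f}(\eta^{0,y})}-\sqrt{\bar{f}(\eta)}=\sum_{k=0}^{L-1}\Bigl(\sqrt{\bar{f}(\eta_{k+1})}-\sqrt{\bar{f}(\eta_{k})}\Bigr)
\]
yields $\int\bigl[\sqrt{\bar{f}(\eta^{0,y})}-\sqrt{\bar{f}(\eta)}\bigr]^{2}d\nu_{\alpha}\le \bigl(\sum_{k}\sqrt{\int[\sqrt{\bar{f}(\eta_{k+1})}-\sqrt{\bar{f}(\eta_{k})}]^{2}d\nu_{\alpha}}\bigr)^{2}$. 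Each map $\eta\mapsto\eta_{k}$ is a composition of nearest-neighbour swaps, hence a bijection of $\chi_{d}^{N}$ preserving $\nu_{\alpha}$; a change of variables therefore reduces the $k$-th integral to $I^{S}_{x_{k},x_{k+1}}(\bar{f},\nu_{\alpha})$ for SSEP steps and to $I^{P}_{x_{k},x_{k+1}}(\bar{f},\nu_{\alpha})$ for KCLG steps (the factor $c\ge 1$ along the path being absorbed for free). Translation invariance of $\bar{f}$ and $\nu_{\alpha}$ makes $I^{\star}_{x,x+e_{j}}(\bar{f},\nu_{\alpha})$ depend only on $j$, so summing over the $N^{d}$ translates gives $I^{P}_{0,e_{j}}(\bar{f},\nu_{\alpha})\le CN^{-2}$ and $I^{S}_{0,e_{j}}(\bar{f},\nu_{\alpha})\le CN^{-\theta}$, where the latter uses the hypothesis $D_{\theta}(f,\nu_{\alpha})\le CN^{d-2}$ together with the splitting $D_{\theta}=D_{P}+N^{\theta-2}D_{S}$.

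Substituting these per-edge bounds, absorbing the bounded rate $c(0,e_{j},\eta)\le 2$ and using $L_{S}\le C(d,l)$, $L_{P}\le 2C\epsilon N$,
\[
E^{l,1}_{0,y}(\bar{f},\nu_{\alpha})\le C\bigl(L_{S}\,N^{-\theta/2}+L_{P}\,N^{-1}\bigr)^{2}\le C(d,l)\bigl(N^{-\theta/2}+\epsilon\bigr)^{2}\le C(d,l)\bigl(N^{-\theta}+\epsilon^{2}\bigr).
\]
Since $\epsilon N\ge 1$ in the regime of interest one has $N^{-\theta}\le\epsilon^{\theta}$, and $\theta<2$ gives $\epsilon^{2}\le\epsilon^{\theta}$, so both terms are bounded by $C(d,l)\epsilon^{\theta}$, yielding the announced estimate. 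The hard part is Step~1: verifying in arbitrary dimension that a mobile $2^{d}$-cube can be assembled from any configuration in $\Omega_{0}^{c}$ using only $C(d,l)$ SSEP moves, and that it can be slid through an arbitrary environment over a distance up to $2\epsilon N$ using only KCLG moves. This is precisely where the non-cooperative character of the constraints---already used in Section~\ref{results} to characterize irreducibility---plays its crucial role.
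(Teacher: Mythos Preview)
Your argument is correct and follows essentially the same path strategy as the paper: assemble a $2^{d}$ mobile cluster inside $\wedge_{l}(0)$ via $O_{d,l}(1)$ SSEP moves, escort the particle from $0$ to $y$ and back with $O(\epsilon N)$ KCLG moves whose rates are $\geq 1$ by construction, telescope, and then use translation invariance of $\bar f$ to get the per--bond bounds $I^{S}_{0,e_j}(\bar f)\le CN^{-\theta}$ and $I^{P}_{0,e_j}(\bar f)\le CN^{-2}$, arriving at $C(d,l)(N^{-\theta}+\epsilon^{2})\le C(d,l)\epsilon^{\theta}$. The only cosmetic differences are that the paper uses Cauchy--Schwarz $(\sum a_k)^{2}\le L\sum a_k^{2}$ rather than the $L^{2}$ triangle inequality, and it tracks the combinatorial factor coming from the choice of the $2^{d}$ initial particle positions explicitly as $l^{d2^{d}}$, which is your ``partition of $\Omega_{0}^{c}$ into finitely many subsets on which the edge sequence is deterministic''.
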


\begin{proof}
The idea consists in expressing the exchange $\eta^{0,y}$ by means of allowed
nearest
neighbor exchanges $\eta^{x,x+1}$  of the generator $\mathcal{L}_{\theta}^N$.
Since the integral is restricted to $\Omega_{0}^c$ we have for certain $2^d+1$ particles in $\wedge_{l}(0)$. We discuss for definiteness the
path in the case $\eta(0)=1$, $\eta(y)=0$, the other possibility can be treated analogously. In order to bring the particle from  $0$ to $y$ we
first move  $2^d$ of the particles in $\wedge_{l}(0)$
close to $0$ by means of the jumps that corresponds to $\mathcal{L}_{S}$. Then we arrange them in order to form a d-dimensional hypercube of linear size $2$, which is a mobile cluster. Now we can shift this cluster plus the particle originally in $0$ in each of the $d$ directions by using only the jumps in $\mathcal L_P$ and we bring them close to $y$.  In figure \ref{path1} and \ref{path2} we show as an example the path which allows to shift in the $e_1$ and $e_2$ direction the mobile $2\times 2$ square of particles (black circles) plus the particle originally in $0$ (grey circle).

\begin{figure}[htp]
  \includegraphics[width=13cm]{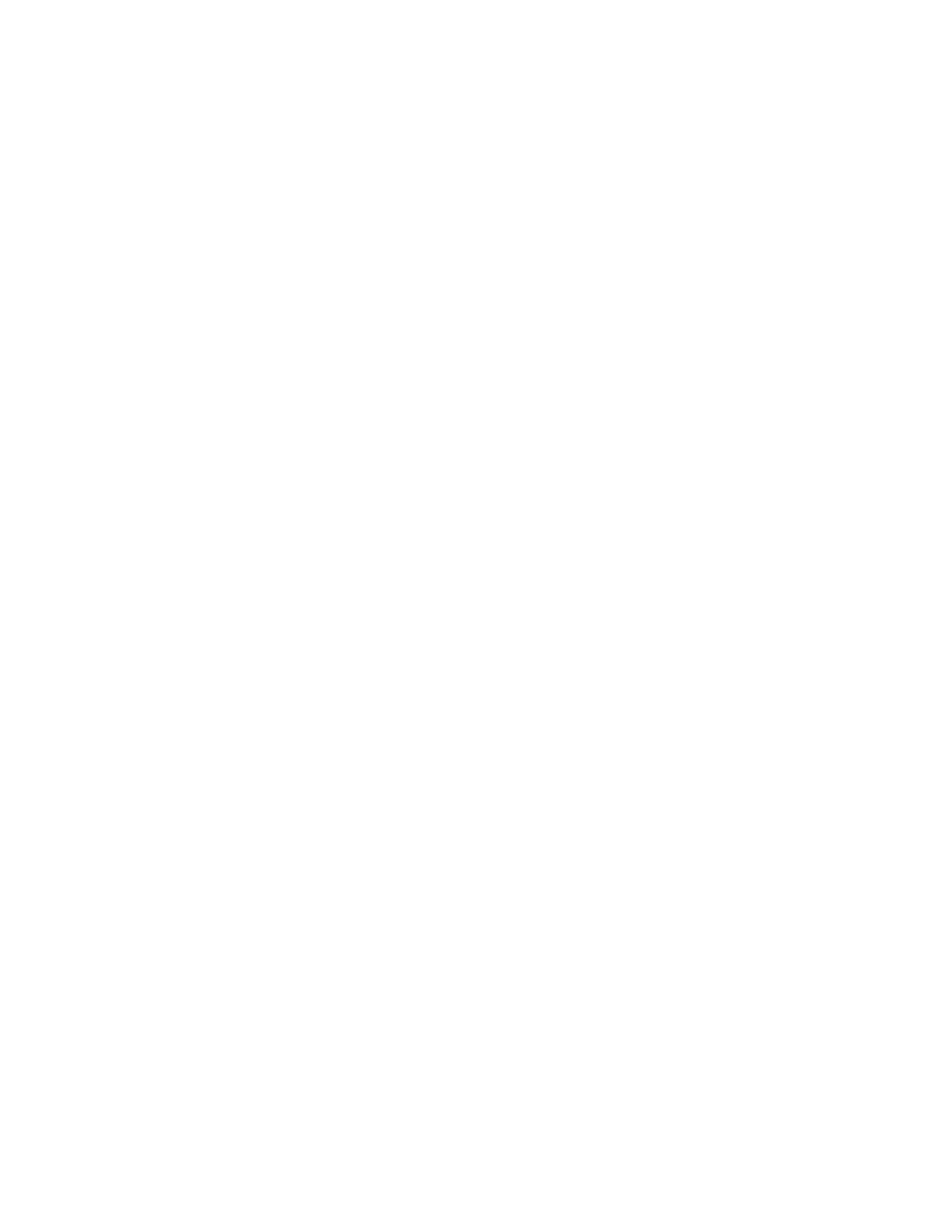}\\
   \caption{Moving in the direction $e_{1}$}\label{path1}
\end{figure}

\begin{figure}[htp]
  \includegraphics[width=12cm]{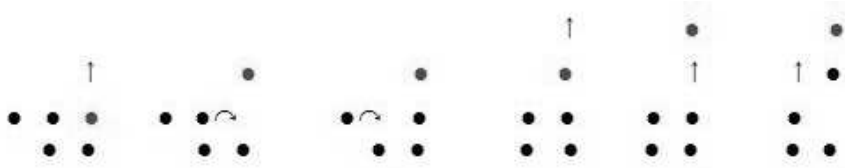}\\
  \caption{Moving in the direction $e_{2}$}\label{path2}
\end{figure}

Then we drop particle  that was originally in $0$ in site $y$
 and bring back the mobile cluster alone to $\wedge_{l}(0)$ again using moves in $\mathcal L_P$. Finally, using the jumps of $\mathcal{L}_{S}$, we put them in their
initial positions.


Note that the above path is uniquely defined only after choosing  the $2^d$ particles in the box $\wedge_{l}(0)$ which are used to form the mobile cluster and whose existence in guaranteed by the fact that the integral is restricted to $\Omega_{0}^c$.
This will bring an entropy factor which corresponds to the number of possible initial positions for the $2^d$ particles, which is bounded by
$l^{d2^d}$.

Once for a given $\eta$ the initial position of the $2^d$ particles which will be used to form the mobile cluster is fixed, namely
$x_{1},..,x_{2^d}$, we let $\tau_{\gamma^{0,y}}\tau_{\gamma^{0,y}-1}...\tau_{1}(\eta)$ be the sequence of nearest neighboring exchanges that
represents the above described path. We can therefore rewrite, for any function $g$,
the square difference
$\left(g(\eta^{0,y})-\bar{g}(\eta)\right)^2$ as a telescopic sum:
\begin{equation*}
\Big[g(\eta^{0,y})-g(\eta)\Big]^{2}=
\Big[\sum_{k=1}^{\gamma^{0,y}}g\Big(\tau_{k}\prod_{i=1}^{k-1}\tau_{i}(\eta)\Big)-g\Big(\prod_{i=1}^{k-1}\tau_{i}(\eta)\Big)\Big]^{2}
\end{equation*}
where $\gamma^{0,y}$ denotes the number of steps of the path which is less than $C(d)(l+\epsilon N)$.

Now we separate the part of the path in which the moves are performed via jumps in $\mathcal{L}_S$ from the one that can be performed by using
$\mathcal{L}_P$ and,
 by using the elementary inequality
$(x+y)^2\leq{2x^2+2y^2}$, we bound last expression by
\begin{equation*}
\Big[\sum_{k=1}^{C(d)l}g\Big(\tau_{k}\prod_{i=1}^{k-1}\tau_{i}(\eta)\Big)-g\Big(\prod_{i=1}^{k-1}\tau_{i}(\eta)\Big)\Big]^{2}
+\Big[\sum_{k=C(d)l+1}^{C(d)N\epsilon }g\Big(\tau_{k}\prod_{i=1}^{k-1}\tau_{i}(\eta)\Big)-
g\Big(\prod_{i=1}^{k-1}\tau_{i}(\eta)\Big)\Big]^{2}
\end{equation*}
Note that in the previous expression we had to take into account the number of steps in the paths, which is order $l$ for the terms involving
moves of $\mathcal L_S$ and of order $N\epsilon$ for those using $\mathcal{L}_{P}$, thanks to our choice of the path. Also note that the term on
the left hand side of last expression, refers to all the jumps inside the box of size $l$: when bringing the $2^d$ particles together and when
putting them in their initial position.

We now apply the above inequality with the choice $g=\sqrt{\bar f}$ and use again the Cauchy-Schwarz inequality on each single term and the fact that the rates of $\mathcal L_P$ (\ref{rates}) are bounded from below by $1$ thanks to our choice of the path for all the terms in the
second telescopic sum. This leads to the bound
\begin{equation*}
E^{l,1}_{0,y}(f,\nu_{\alpha})\leq C(d)l\sum_{\substack{a_{1}\in{\wedge_{l}(0)}\\{..}\\{a_{2^d}\in{\wedge_{l}(0)}}}}\int_{\Omega_{0}^c}1_{\{a_{1}=x_{1},..,a_{2^d}=x_{2^d}\}}
\sum_{e_{i}}\Big[\sqrt{\bar{f}(\eta^{e_{i}})}-\sqrt{\bar{f}(\eta)}\Big]^{2}\nu_{\alpha}(d\eta)
\end{equation*}
\begin{equation*}
+C(d)N
\epsilon\sum_{\substack{a_{1}\in{\wedge_{l}(0)}\\{..}\\{a_{2^d}\in{\wedge_{l}(0)}}}}\int_{\Omega_{0}^c}1_{\{a_{1}=x_{1},..,a_{2^d}=x_{2^d}\}}
\sum_{\tilde{e}_{i}}c(\tilde{e}_{i},\eta)\Big[\sqrt{\bar{f}(\eta^{\tilde{e}_{i}})}- \sqrt{\bar{f}(\eta)}\Big]^{2}\nu_{\alpha}(d\eta),
\end{equation*}
where $\{e_{i}\}_{i}$ denotes the bonds that we use inside $\wedge_{l}(0)$ when taking the $2^d$ particles whose initial positions are
$x_{1},..x_{2^d}$, close to the particle at the site $0$, by the jumps of the exclusion process, while $\tilde{e}_{i}$ corresponds to the bonds
used by the generator $\mathcal{L}_{P}$ when performing the rest of the path and $c(\tilde{e}_{i},\eta)$ denotes the corresponding jump rate.
Since there are $l^{d2^d}$ chances for the initial positions $x_1,\dots x_{2^d}$, we can bound last expression by:
\begin{equation*}
l^{d2^d}C(d)l\sum_{x,x+1\in{\wedge_{l}(0)}}I_{x,x+1}^{S}(\bar{f},\nu_{\alpha}) +l^{d2^d}C(d)\epsilon
N\sum_{x,x+1\in{\wedge_{l}(0)}}I_{x,x+1}^{P}(\bar{f},\nu_{\alpha}).
\end{equation*}

By the equality (\ref{relation I}) and the bound on the Dirichlet form of $f$, it holds that:
\begin{equation*}
\forall{x\in{\mathbb{T}_{N}^{d}}}:\\ \quad\ I_{x,x+1}^{S}(\bar{f},\nu_{\alpha})\leq{\frac{C}{N^\theta}},
\\ \quad\
I_{x,x+1}^{P}(\bar{f},\nu_{\alpha})\leq{\frac{C}{N^{2}}},
\end{equation*}
which together with $l\leq{\epsilon N}$, ends the proof.
\end{proof}

By the definition of the Dirichlet form $D_{\theta}^{2,l}(f,\nu_{\alpha}^{2,l})$ in (\ref{dirichletformoff_{0,y}}), together with
(\ref{firstpartofdirichletform}) and the previous Lemma, we can restrict to densities $f$ that satisfy
\begin{equation*}
D^{2,l}_{\theta}(f,\nu_{\alpha}^{2,l})\leq{C(d,l) \epsilon^{\theta}}.
\end{equation*}

So, to conclude the proof of the Two-blocks estimate it is enough to show that
\begin{equation*}
\limsup_{l\rightarrow{+\infty}}\\\limsup_{\epsilon\rightarrow{0}}
\\\sup_{D_{\theta}^{2,l}(f,\nu_{\alpha}^{2,l})\leq{C(l)\epsilon^\theta}}
\int_{\Omega^{c}_{0,y}}|\xi_{1}^{l}(0)-\xi_{2}^{l}(0)|f(\xi)\nu_{\alpha}^{2,l}(d\xi)=0,
\end{equation*}
where the supreme is carried over densities with respect to $\nu_{\alpha}^{2,l}$. Note that the parameter $\epsilon$ is appearing only on the
bound of the Dirichlet form.  The proof now follows the same lines as in the case of the Zero-Range process in \cite{K.L.}, by starting to
decompose the Dirichlet form along the hyperplanes and applying the equivalence of ensembles. For the detailed strategy see section 5.5 of
\cite{K.L.}.

\section{Spectral Gap}
\label{gap}

In this section we analyze the magnitude of the spectral gap for the
one dimensional generators on finite boxes, ${\mathcal{L}}_{P,\Lambda_N}$ and ${\mathcal{L}}^N_{\theta,\Lambda_N}$, which have been defined in section 2.
Note that the results below on the scaling of the spectral gap with the lattice size have not been used in the previous sections to derive the hydrodynamic limit. This was possible thanks   to the fact that we were considering a gradient choice of the rates. The following analysis of the spectral gap
can be regarded as a first step towards the analysis of the non gradient version of our models, e.g. for the choice
\begin{eqnarray*}
c'(x,x+e_j,\eta)=\left\{
\begin{array}{rl}
1  & \mbox{if~} \eta(x-e_j)+\eta(x+2 e_j)\geq 1\\
0 & \mbox{otherwise}
\end{array}.
\right.
\end{eqnarray*}
which is a non gradient version with the same kinetic constraints as our original choice (\ref{rates}),  namely $c'(x,x+e_i,\eta)=0$ if and only if $c(x,x+e_i,\eta)=0$.

For simplicity in the following we drop $\Lambda_N$ from our notation. The main ingredient of our
 proofs will be a comparison with the spectral gap $\lambda({\mathcal{L}}_{LR})$ for the {\sl long range exclusion process} and the
 use of path arguments. Let us start by recalling the definition of ${\mathcal{L}}_{LR}$ and the result in \cite{Q} for its spectral gap.
 The action of ${\mathcal{L}}_{LR}$ on local functions $f:\Sigma_{N,k}\to\mathbb R$ is given by

\begin{equation} \label{longrangeexclusion}
(\mathcal{L}_{LR}f)(\eta)=\sum_{x,y\in{\Lambda_{N}}}\frac{1}{N}\eta(x)(1-\eta(y))(f(\eta^{x,y})-f(\eta)),
\end{equation}
where $\eta^{x,y}$ as defined in (\ref{etaxy}).

Consider the Dirichlet form $\mathfrak{D}_{LR}$ associated to $\mathcal{L}_{LR}$, with respect to the uniform measure $\nu_{N,k}$, given
explicitly by:
\begin{equation} \label{dirichletformLRange}
\mathfrak{D}_{LR}(f,\nu_{N,k})=\frac{1}{N}\sum_{x,y\in{\Lambda_{N}}}\int_{\Sigma_{N,k}}(f(\eta^{x,y})-f(\eta))^{2}\nu_{N,k}(d\eta).
\end{equation}

Quastel in \cite{Q}, obtained that
\begin{equation}
\textbf{Var}(f,\nu_{N,k})\leq{\mathfrak{D}_{LR}(f,\nu_{N,k})}, \label{eq:sgaplongrange}
\end{equation}
by computing precisely the eigenvalues of the generator $\mathcal{L}_{LR}$. In order to prove our results (Proposition \ref{spectralgaplarge} and \ref{spectralgapsmall}) for the spectral gap of the process with
generator $\mathcal{L}_{\theta}^N$ $\forall k$ and with generator
$\mathcal{L}_{P}$ for $k>1/3$, we proceed in two steps.

Fix an integer $k$ such that the density is restricted to $\rho=\frac{k}{N}>\frac{1}{3}$. With this restriction, for each
$\eta\in{\Sigma_{N,k}}$, there exists a couple of particles whose distance is smaller or equal to two and $\Sigma_{N,k}$ is irreducible. Furthermore we will show that there exists a constant $C$ that does not depend
on $N$ nor $k$ such that
\begin{equation}
\mathfrak{D}_{LR}(f,\nu_{N,k})\leq{\frac{C\rho}{\rho-1/3}N^2\mathfrak{D}_{P}(f,\nu_{N,k})}, \label{comparedirichletformlargedensity}
\end{equation}
where $\mathfrak{D}_{P}(f,\nu_{N,k})$ is the Dirichlet that corresponds to $\mathcal{L}_{P}$.

Last result together with (\ref{eq:sgaplongrange}) allows to conclude the following Poincar\'e inequality:

\begin{proposition} \label{thsgaplargedensity}
Fix $k>N/3$. For the Markov process with generator $\mathcal{L}_{P,N}$ and for every $f\in{L^{2}(\nu_{N,k})}$, there exists a constant $C$ not
depending on $N$ nor $k$ such that:
\begin{equation*}
\textbf{Var}(f,\nu_{N,k})\leq{\frac{C\rho}{\rho-1/3}N^2\mathfrak{D}_{P}(f,\nu_{N,k})}.
\end{equation*}
\end{proposition}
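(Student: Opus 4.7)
The plan is to derive Proposition \ref{thsgaplargedensity} from the comparison inequality (\ref{comparedirichletformlargedensity}) together with Quastel's inequality (\ref{eq:sgaplongrange}). Indeed, once (\ref{comparedirichletformlargedensity}) is established, the chain
\begin{equation*}
\textbf{Var}(f,\nu_{N,k}) \;\leq\; \mathfrak{D}_{LR}(f,\nu_{N,k}) \;\leq\; \frac{C\rho}{\rho-1/3}\, N^{2}\, \mathfrak{D}_{P}(f,\nu_{N,k})
\end{equation*}
immediately yields the Poincar\'e inequality. Hence the substantive content of the proposition is the comparison (\ref{comparedirichletformlargedensity}).

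To prove this comparison I would use the \emph{canonical path method} of Diaconis--Stroock/Jerrum--Sinclair. For each ordered pair of sites $(x,y)\in\Lambda_N^{2}$ and each configuration $\eta\in\Sigma_{N,k}$ with $\eta(x)=1$, $\eta(y)=0$, I need to build an explicit sequence $\Gamma(\eta,x,y)=(\eta=\eta_{0},\eta_{1},\dots,\eta_{n}=\eta^{x,y})$ of configurations such that each transition $\eta_{i}\to\eta_{i+1}$ is an allowed nearest-neighbor swap of $\mathcal{L}_{P}$ (i.e.\ its rate is positive). The path is the natural generalization of the mobile-cluster argument used in Lemma \ref{pathlemma}: first locate a pair of particles at distance at most two (a mobile cluster), which is guaranteed to exist because the hypothesis $\rho>1/3$ combined with a pigeonhole argument on any three consecutive sites forces at least one such pair; second, transport this mobile cluster to the vicinity of $x$ using only $\mathcal{L}_{P}$-allowed moves; third, use the cluster to carry the particle at $x$ along the straight line to $y$; fourth, release the particle at $y$ and bring the mobile cluster back to its original position. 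Each of these stages costs at most $O(N)$ nearest-neighbor swaps, so the total path length satisfies $|\Gamma(\eta,x,y)|\leq CN$.

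With the paths in hand, the standard argument writes
\begin{equation*}
f(\eta^{x,y})-f(\eta) = \sum_{i=0}^{|\Gamma|-1}\bigl(f(\eta_{i+1})-f(\eta_{i})\bigr),
\end{equation*}
squares, applies Cauchy--Schwarz (producing a factor $|\Gamma|\leq CN$), and then sums over $(\eta,x,y)$ and integrates against $\nu_{N,k}$. This reorganizes the sum into a sum over allowed nearest-neighbor bonds $e=(u,u+e_{1})$ of the quantities $(f(\xi^{u,u+e_{1}})-f(\xi))^{2}$ appearing in $\mathfrak{D}_{P}$, weighted by the \emph{congestion coefficient}
\begin{equation*}
A(e) \;=\; \frac{1}{N}\sum_{(\eta,x,y)\,:\,e\in\Gamma(\eta,x,y)} \frac{\nu_{N,k}(\eta)}{\nu_{N,k}(\xi_{e})\,c(u,u+e_{1},\xi_{e})},
\end{equation*}
where $\xi_{e}$ is the configuration on which the edge $e$ is traversed. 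The job is to show that $\sup_{e}A(e)\leq C\rho/(\rho-1/3)\cdot N^{2}$. The factor $N^{2}$ is contributed by the product of the path length $CN$ and the number $N$ of pairs $(x,y)$ passing through a given edge along the transport direction; the factor $\rho/(\rho-1/3)$ arises because the path construction requires a mobile cluster to be available, and the fraction of configurations (under $\nu_{N,k}$) with a usable mobile cluster in a window of fixed size is comparable to $\rho-1/3$, while the denominator $c(u,u+e_{1},\xi_{e})\in\{1,2\}$ contributes a harmless constant thanks to the non-cooperative choice of path.

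The main obstacle is the uniform control of $A(e)$ as $\rho\downarrow 1/3$. When $\rho$ is barely above $1/3$ the mobile clusters are sparse: there may be only one or two such clusters in the whole torus, and many configurations force the path to use the same mobile cluster, producing a bottleneck. One has to quantify this by choosing a \emph{canonical} mobile cluster in each $\eta$ (say, the one closest to $x$ in lexicographic order), and then show, either by an explicit injection between configurations or by an equivalence-of-ensembles estimate on $\nu_{N,k}$, that the number of $\eta$'s whose canonical cluster forces the path through a fixed bond $e$ is no more than $C\rho(\rho-1/3)^{-1}\cdot N\cdot|\Sigma_{N,k}|/N^{2}$. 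Carrying out this counting carefully is the delicate step; everything else is bookkeeping along the lines of the proof of Lemma \ref{pathlemma}.
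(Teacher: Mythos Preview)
Your overall architecture---Quastel's inequality (\ref{eq:sgaplongrange}) for $\mathcal{L}_{LR}$ followed by the path comparison (\ref{comparedirichletformlargedensity})---is exactly the paper's strategy, and your description of the path (fetch a mobile cluster, carry the particle from $x$ to $y$, return the cluster) matches the paper's construction. The difference lies in how the factor $\rho/(\rho-1/3)$ is extracted.

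You propose selecting a \emph{canonical} mobile cluster (e.g.\ the one closest to $x$) and then controlling the congestion $A(e)$ by an injection or equivalence-of-ensembles count, which you rightly flag as the delicate step and leave unfinished. The paper avoids this entirely by an \emph{averaging} trick: it first proves the deterministic lower bound
\begin{equation*}
\sum_{z\in\Lambda_N}\sum_{l=1}^{2}\eta(z)\eta(z+l)\;\ge\;\frac{|\mathcal{B}|}{2}\;\ge\;\frac{3}{4}\Bigl(k-\frac{N}{3}-\frac{2}{3}\Bigr)
\end{equation*}
on the number of mobile clusters in any $\eta\in\Sigma_{N,k}$, and then inserts the resulting inequality $1\le \frac{2}{|\mathcal{B}|}\sum_{z}\sum_{l}\eta(z)\eta(z+l)$ inside the long-range Dirichlet form. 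This produces, for each $(x,y,\eta)$, a sum over \emph{all} available clusters $z$; for each fixed $z$ one builds the path through that specific cluster and applies Cauchy--Schwarz. After the telescoping, the sum over $z$ is bounded by $\sum_z\eta(z)=k$, the sums over $x,y$ contribute $N^2$, the path length contributes $N$, and the prefactor $1/(N|\mathcal{B}|)$ combines with $k$ to give exactly $\rho/(\rho-1/3)$.

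The upshot is that the paper never has to analyze congestion at a fixed edge with a canonical cluster choice: the scarcity of clusters near $\rho=1/3$ is handled automatically by the denominator $|\mathcal{B}|$, while the sum over all clusters turns a hard counting problem into the trivial bound $\sum_z\eta(z)=k$. Your canonical-cluster approach should in principle also work, but completing it would require precisely the injection you allude to, and the resulting argument would be longer and more fragile near $\rho=1/3$. The averaging device is the cleaner route and is worth internalizing as a general tool for path comparisons when auxiliary structures (here, mobile clusters) are not unique.
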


The second inequality in the result of Proposition \ref{spectralgaplarge} is an immediate consequence of last result. The first inequality
follows from the fact that for any function $f$ it holds that $\mathfrak{D}_{P}(f,\nu_{N,k})\leq\mathfrak{D}_{\theta}(f,\nu_{N,k})$.

The case in which $\rho=\frac{k}{N}\leq{\frac{1}{3}}$ is more demanding. Since the hyperplanes $\Sigma_{N,k}$ are no longer irreducible for
${\mathcal{L}}_P$, the corresponding spectral gap is zero. A natural issue is determining the magnitude of the spectral gap for
${\mathcal{L}}_P$ on the restricted irreducible set of configurations with $k$ particles and at least one couple of particles at distance at
most two. In this case the invariant measure is no longer the same one as for the long range jumps and the comparison with the latter process is
no more useful. Instead, we will here consider for $k/N\leq 1/3$ the  spectral gap for the modified generator ${\mathcal{L}}_{\theta}^N$ which,
thanks to the addition of the exclusion part, is  ergodic on the hyperplanes $\Sigma_{N,k}$ for any $k$ and reversible with respect to
$\nu_{N,k}$. By comparing the Dirichlet form of $\mathcal{L}_{\theta}^N$ with the one of $\mathcal{L}_{LR}$, we show that:
\begin{proposition} \label{thsgapsmalldensity}
Fix $k\leq{N/3}$. For the Markov process with generator given by $\mathcal{L}_{\theta}^N$, with $\theta=1$ and for every
$f\in{L^{2}(\nu_{N,k})}$, there exists a constant $C$ not depending on $N$ nor $k$ such that:
\begin{equation*}
\textbf{Var}(f,\nu_{N,k})\leq{\frac{C}{\rho^{2}}N\mathfrak{D}_{S}(f,\nu_{N,k})+\frac{C}{\rho}N^2\mathfrak{D}_{P}(f,\nu_{N,k})}.
\end{equation*}
\end{proposition}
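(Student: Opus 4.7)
The strategy is a Dirichlet form comparison with the long range exclusion process in the spirit of Diaconis-Saloff-Coste. By Quastel's bound (\ref{eq:sgaplongrange}) it suffices to establish
\begin{equation*}
\mathfrak{D}_{LR}(f,\nu_{N,k}) \;\le\; \frac{CN}{\rho^{2}}\,\mathfrak{D}_{S}(f,\nu_{N,k})\;+\;\frac{CN^{2}}{\rho}\,\mathfrak{D}_{P}(f,\nu_{N,k}),
\end{equation*}
and this will be obtained by exhibiting, for each triple $(x,y,\eta)$ with $\eta(x)=1$ and $\eta(y)=0$, a canonical path of allowed transitions of $\mathcal{L}^{N}_{\theta,\Lambda_N}$ joining $\eta$ to $\eta^{x,y}$ and then telescoping $f(\eta^{x,y})-f(\eta)$ along it.

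The path is built in the same spirit as in Lemma \ref{pathlemma}. In a \emph{preparation stage} one selects, by a deterministic rule (for instance, the two particles of $\eta$ closest to $x$ in a fixed ordering), two particles of $\eta$ and uses SSEP jumps, available at rate $N^{-1}$ in $\mathcal{L}^{N}_{\theta,\Lambda_N}$, to bring them to distance at most two, producing a mobile cluster; such a preparation is forced since at $\rho\le 1/3$ a positive fraction of configurations in $\Sigma_{N,k}$ carry no mobile cluster. In a \emph{transport stage} one uses only $\mathcal{L}_{P,\Lambda_N}$ jumps, whose rates are bounded below by one along the path thanks to the non-cooperative property, to shift the cluster towards $x$, realise the exchange $x\leftrightarrow y$, and return the cluster close to its initial neighbourhood. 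A \emph{reversal stage} then undoes the preparation via SSEP jumps. The preparation and reversal stages together contribute a length $O(N)$ in SSEP bonds while the transport stage contributes a length $O(N)$ in $\mathcal{L}_P$ bonds.

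Applying Cauchy-Schwarz along the canonical path, interchanging the sum over $(x,y,\eta)$ with the sum over bonds, and controlling the congestion then yields the desired inequality. The factor $N^{2}/\rho$ in front of $\mathfrak{D}_{P}$ comes from the squared length of the transport stage together with the typical density $\rho$ of mobile clusters in a configuration sampled from $\nu_{N,k}$, while the factor $N/\rho^{2}$ in front of $\mathfrak{D}_{S}$ combines the length of the SSEP portion, the rate normalisation $N^{-1}$ built into $\mathcal{L}^{N}_{\theta,\Lambda_N}$, and the inverse probability that two particles are found within distance two of a prescribed site. The main obstacle is precisely the uniform control of the SSEP bond congestion: a naive choice of the particles forming the mobile cluster can easily route $\Omega(N^{2})$ distinct canonical paths across a single SSEP bond, which would spoil the $\rho^{-2}$ factor, so the deterministic selection rule must be designed so that at most $CN/\rho^{2}$ paths cross each SSEP bond; this is the step where the argument departs substantially from the high density case of Proposition \ref{thsgaplargedensity}, and once it is achieved combining with (\ref{eq:sgaplongrange}) finishes the proof.
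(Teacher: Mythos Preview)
Your overall strategy is correct---reduce to Quastel's bound via a path comparison---but the proposal stops exactly at the point where the real work lies. You yourself flag the SSEP congestion bound as ``the main obstacle'' and as ``the step where the argument departs substantially'', and then simply assert that a suitable deterministic selection rule exists without producing one or verifying the bound $CN/\rho^{2}$. As written this is a plan, not a proof. Moreover, two of the heuristics you give for the final exponents are off: the quantity $\mathfrak{D}_{S}$ in the statement is the \emph{unit-rate} SSEP Dirichlet form, so the factor $N^{\theta-2}=N^{-1}$ plays no role in producing the $N/\rho^{2}$; and ``typical density of mobile clusters'' is not what yields the $1/\rho$ in front of $\mathfrak{D}_{P}$, since at $\rho\le 1/3$ a generic configuration may contain no mobile cluster at all. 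Finally, declaring the SSEP preparation length to be $O(N)$ makes the arithmetic hard to close: with $L_{S}\sim N$ and congestion $K$, Cauchy--Schwarz gives $L_{S}K/N$ in front of $\mathfrak{D}_{S}$, so you would need $K\le N/\rho^{2}$ uniformly over bonds and configurations, and you do not show this.

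The paper's argument is organised differently and avoids the congestion analysis altogether. It introduces a free integer parameter $j$, inserts into $\mathfrak{D}_{LR}$ the factor
\[
1\;\le\;\frac{2}{|\mathcal{B}_{j}|}\sum_{z\in\Lambda_{N}}\sum_{l=1}^{j}\eta(z)\eta(z+l),
\]
where $|\mathcal{B}_{j}|$ counts particles having another particle within distance $j$, and proves the deterministic lower bound $|\mathcal{B}_{j}|\ge\frac{j+1}{j}\bigl(k-\frac{N}{j+1}-\frac{j}{j+1}\bigr)$. For each quadruple $(x,y,z,l)$ with $\eta(z)=\eta(z+l)=1$ one builds a path whose SSEP portion has length $O(j)$ (just slide the particle at $z+l$ next to the one at $z$) rather than $O(N)$, and whose $\mathcal{L}_{P}$ portion has length $O(N)$. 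Telescoping and Cauchy--Schwarz, together with the crude estimates $\sum_{x,y}1\le N^{2}$, $\sum_{l}1\le j$ and $\sum_{z}\eta(z)=k$, give
\[
\mathfrak{D}_{LR}(f,\nu_{N,k})\;\le\;\frac{Cj^{2}\rho}{\rho-1/j}\,N\,\mathfrak{D}_{S}(f,\nu_{N,k})\;+\;\frac{Cj\rho}{\rho-1/j}\,N^{2}\,\mathfrak{D}_{P}(f,\nu_{N,k}),
\]
and the choice $j\sim C/\rho$ yields the stated inequality. The point is that the summation over all helper pairs $(z,z+l)$ plays the role of an averaging that replaces any congestion bookkeeping, and the parameter $j$ interpolates between ``enough helper pairs'' (forcing $j\gtrsim 1/\rho$) and ``short SSEP preparation'' (favouring small $j$). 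This trade-off, absent from your sketch, is what actually produces the factors $1/\rho^{2}$ and $1/\rho$.
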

Proposition \ref{spectralgapsmall}
 is an immediate consequence of Proposition \ref{thsgapsmalldensity}.

\subsection{Proof of Proposition \ref{thsgaplargedensity}}

\quad \vspace{0.1cm}

Fix an integer $k$ and suppose that $\rho=\frac{k}{N}>\frac{1}{3}$. The Dirichlet form associated to $\mathcal{L}_{P}$ is given explicitly by
\begin{equation*}
\mathfrak{D}_{P}(f,\nu_{N,k})=\frac{1}{2}\sum_{\substack{x,y\in{\Lambda_{N}}\\|x-y|=1}}\int_{\Sigma_{N,k}}c(x,y,\eta)(f(\eta^{x,y})-f(\eta))^{2}\nu_{N,k}(d\eta)
\end{equation*}
where $c(x,y,\eta)$ and $\eta^{x,y}$ were defined on (\ref{rates}) and (\ref{etaxy}), respectively. We have seen above that it is enough to show
(\ref{comparedirichletformlargedensity}).

The idea consists in expressing the exchange $\eta^{x,y}$, for each $x,y\in{\Lambda_{N}}$ by means of a sequence of allowed jumps of
$\mathcal{L}_{P}$. Consider for example the case $x<y$ and $\eta(x)=1$, $\eta(y)=0$. The restriction $\rho>\frac{1}{3}$ guarantees that there
exists a couple of sites $(a,b)$ both in $\Lambda_{N}$ such that $\eta(a)=\eta(b)=1$ and $|a-b|\leq{2}$. Once the couple has been chosen
we shift it
close to sites $x-2,x-1$.
We have thus reached a configuration with
$\eta(x-2)=\eta(x-1)=\eta(x)=1$, and we now
shift the three particles to $(y-3,y-2,y-1)$.
The allowed path which performs the shift of one step
to the right is the composition of the following
three basic steps:
$\eta\to \eta_1=\eta^{x,x+1}$, $\eta_1\to\eta_2=\eta_1^{x-1,x}$ and
$\eta_2\to\eta_3=\eta_2^{x-2,x-1}$.
When after a proper number of one step shifts
we reach the configuration $\eta_n$ with
$\eta(y-3)=\eta(y-2)=\eta(y-1)=1$, we can
perform the exchange $\eta_n\to\eta_{n+1}=\eta_n^{y-1,y}$ which corresponds to dropping the particle at $y$. Then we make a similar backward path, this time shifting the two particles plus the vacancy untill bringing the vacancy in $x$ and finally we bring back the two particles to their original position $(a,b)$.

As we did for Lemma \ref{pathlemma}, we have to take into account the entropic term corresponding to the possible initial positions of the
couple of particles $(a,b)$. Some care is required at this point, since using the rough bound $N$ for this original position would lead to an
additional factor $N$ in the inequality \ref{comparedirichletformlargedensity}. Let us start by presenting  a lower bound for the number of
couples of particles at distance at most two \quad \vspace{0.5cm}

\textbf{Lower bound for the number of couples for $k>N/3$}

\quad\ Fix a configuration $\eta\in{\Sigma_{N,k}}$. Let $\mathcal{B}$ be the set of sites which are occupied and such that for each of them
there is a particle at distance at most two, $\mathcal B=\{x: \eta(x)=1, \eta(x+1)+\eta(x-1)+\eta(x+2)+\eta(x-2)\geq{1}\}$.
Let also $\mathcal A$ denote the remaining occupied sites $\mathcal A=\{x:\eta(x)=1, x\in \Lambda_N\setminus \mathcal B\}$. The following holds:
\begin{equation*}
k=|\mathcal{A}|+|\mathcal{B}|
\end{equation*}
\begin{equation*}
N\geq{3(|\mathcal{A}|-1)+1+|\mathcal{B}}|,
\end{equation*}
where for a set $S$, $|S|$ denotes the cardinality of the set.

The first equality is obvious. In order to establish the second property we use the fact that, if $x$ and $y$ belong to $\mathcal A$, then $|x-y|\geq 3$.
The inequality follows by organizing the $k$ particles in the closest configuration which does not change the value of $|\mathcal A|$ and  $|\mathcal B|$.
The minimum number of couples of particles at distance at most two and such that there are not common sites among different couples is  $|\mathcal{B}|/2$, see e.g. the figure below
\begin{figure}[htp]
  \includegraphics[width=8cm]{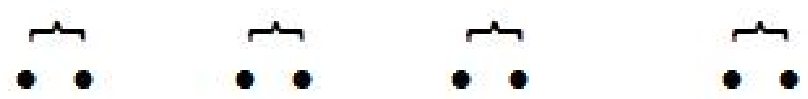}\\
\end{figure}
in which there are $8$ neighboring particles but just $4$ couples.
This, together with the above established relations among $k,N,|\mathcal B|$, gives the following lower bound for the number of couples
\begin{equation} \label{couplelowerbound}
\sum_{z\in{\Lambda_{N}}}\eta(z)\eta(z+1)+\eta(z)\eta(z+2)\geq{|\mathcal{B}|/2\geq{\frac{3}{4}(k-N/3-2/3)}}.
\end{equation}

Introducing this term in the Dirichlet form of the long range exclusion (\ref{dirichletformLRange}) we bound it by:
\begin{equation}
\frac{2}{N|\mathcal{B}|}\sum_{x,y\in{\Lambda_{N}}}\sum_{\eta\in{\Sigma_{N,k}}}\sum_{z\in{\Lambda_{N}}}\sum_{l=1}^{2}\eta(z)\eta(z+l)
(f(\eta^{x,y})-f(\eta))^{2}\nu_{N,k}(\eta). \label{sgap1}
\end{equation}
Let us now consider a term in the sum above
\begin{equation*}
\eta(z)\eta(z+l)\Big(f(\eta^{x,y})-f(\eta)\Big)^{2}
\end{equation*}
and let $x-(z+1)=n$ and $y-x=m$.
By using the construction sketched above for a the possible path  which connects $\eta$ to $\eta^{x,y}$ via allowed elementary exchanges, it is
possible to define a sequence $\eta_i$ for $1\leq i\leq \gamma_z^{x,y}$ with $\gamma_z^{x,y}=5(m-1)+4(n-1)+2$ with the following properties:
$\eta_1=\eta$, $\eta_{\gamma_z^{x,y}}=\eta^{x,y}$ and $\forall i$ there exists $x_i\in\Lambda_N$ such that $\eta_i=\eta_{i-1}^{x(i),x(i)+1}$ and
$c(x(i),x(i)+1,\eta_{i-1})>0$, namely the exchanges are permitted for the generator $\mathcal{L}_{P}$.
Therefore we can rewrite each $\eta(z)\eta(z+l)(f(\eta^{x,y})-f(\eta))^2$ as the telescopic sum:
\begin{equation*}
\eta(z)\eta(z+l)\Big(f(\eta^{x,y})-f(\eta)\Big)^{2}=\eta(z)\eta(z+l)\Big(\sum_{i=1}^{\gamma^{x,y}_{z}-1}f(\eta_n)-f(\eta_{n-1})\Big)^2
\end{equation*}
By using this equality together with $c(x(i),x(i)+1,\eta_{i-1})>0$, the fact that the size of the path $\gamma_{x,y}^z$ is of $O(N)$ and
applying Cauchy-Schwarz inequality we can finally bound (\ref{sgap1}) from above by:
\begin{equation*}
\frac{CN}{N|\mathcal{B}|}\sum_{x,y\in{\Lambda_{N}}}\sum_{\eta\in{\Sigma_{N,k}}}\sum_{z\in{\Lambda_{N}}}\sum_{l=1}^{2}\eta(z)\eta(z+l)
\sum_{\tilde{e}_{i}}c(i,i+1,\eta)(f(\eta^{\tilde{e}_{i}})-f(\eta))^{2}\nu_{N,k}(\eta),
\end{equation*}
where $\tilde{e}_{i}=\{i,i+1\}$ denotes one bond that we have used when performing the path that takes the particle from $x$ to $y$ using the
couple at the sites $z$ and $z+1$ and $C$ is a constant independent on $N$ and $k$.
Estimating over all $x$ and $y\in{\Lambda_{N}}$, last expression is bounded above by
\begin{equation*}
\frac{CN^3}{N|\mathcal{B}|}\sum_{\eta\in{\Sigma_{N,k}}}\sum_{z\in{\Lambda_{N}}}\eta(z)\sum_{x}c(x,x+1,\eta)(f(\eta^{x,x+1})-f(\eta))^{2}\nu_{N,k}(\eta).
\end{equation*}
By the estimate (\ref{couplelowerbound}) and since the number of particles is $k$, last expression is bounded above by
\begin{equation*}
\frac{C\rho}{\rho-1/3}N^2\mathfrak{D}_{P}(f,\nu_{N,k}).
\end{equation*}

\vspace{0.3cm}

\subsection{Proof of Proposition \ref{thsgapsmalldensity}}

\quad\ \vspace{0.3cm}

Now, fix an integer $k$ such that $\rho=\frac{k}{N}\leq{\frac{1}{3}}$. In this case, we are going to show that
\begin{equation}
\mathfrak{D}_{LR}(f,\nu_{N,k})\leq{\frac{C}{\rho^{2}}N\mathfrak{D}_{S}(f,\nu_{N,k})+\frac{C}{\rho}N^2\mathfrak{D}_{P}(f,\nu_{N,k})},
\label{comparedirichletformsmalldensity}
\end{equation}
which is enough to conclude. As before, the idea consists in expressing a path from $x$ to $y$ using the admissible jumps of
$\mathcal{L}_{\theta}^N$.

For this choice of $k$ we are no more guaranteed that there exist two particles at distance at most two, which was a key ingredient to construct the path in the high density regime. The idea will be to make use of the simple exclusion jumps to construct such mobile clusters and then proceed as before in a similar way as we did in
in the proof of Lemma \ref{pathlemma}.

Fix a distance $j$ and
denote by $\mathcal{B}_{j}$ the set of particles at distance at most $j$, $\mathcal B_j=\{x: \eta(x)=1, \exists l\in(-j,-1)\cup(1,j) ~s.t.~
\eta(x+l)=1\}$
 and by $\mathcal{A}_{j}$ the remaining particles.
 Then, the following holds:
\begin{equation*}
k=|\mathcal{A}_{j}|+|\mathcal{B}_{j}|
\end{equation*}
\begin{equation*}
N\geq{(j+1)(|\mathcal{A}_{j}|-1)+1+|\mathcal{B}_{j}|}
\end{equation*}
This inequality is obtained in the same manner as before considering that now the minimum distance that the $\mathcal{A}_{j}$-particles have to be is $j+1$.

As before, the minimum number of couples that one can have is $|\mathcal{B}_{j}|/2$. By simple computations we obtain the lower bound
\begin{equation} \label{2couplelowerbound}
\sum_{z\in{\Lambda_{N}}}\sum_{l=1}^{j}\eta(z)\eta(z+l)\geq\frac{|\mathcal{B}_{j}|}{2}\geq{\frac{j+1}{2j}\Big(k-\frac{N}{j+1}-\frac{j}{j+1}}\Big).
\end{equation}

Introducing this inequality in the Dirichlet form of the long range exclusion (\ref{dirichletformLRange}), we bound it from above by:
\begin{equation*}
\frac{C}{N|\mathcal{B}_{j}|}\sum_{x,y\in{\Lambda_{N}}}\sum_{\eta\in{\Sigma_{N,k}}}\sum_{z\in{\Lambda_{N}}}\sum_{l=1}^{j}\eta(z)\eta(z+l)
(f(\eta^{x,y})-f(\eta))^{2}\nu_{N,k}(\eta).
\end{equation*}
For each configuration $\eta$ and each choice $x$, $y$, $z$ and $l$, we can now construct a path which first bring together the two particles in
$z$ and $z+l$ by using the jumps of the simple exclusion and then uses this mobile cluster to perform the exchange of occupation variables in
$x$ and $y$,
as in the proof of Lemma \ref{pathlemma}. Here $\mathcal{L}_{S}$, also denotes the generator of the Symmetric Simple Exclusion process
restricted to the box $\Lambda_{N}$, given on local functions by
\begin{equation*}
(\mathcal{L}_{S}f)(\eta)=\sum_{\substack{x,y\in{\Lambda_{N}}\\|x-y|=1}}\frac{1}{2}\eta(x)(1-\eta(y))(f(\eta^{x,y})-f(\eta)).
\end{equation*}

Since the jumps of the exclusion are used just to put the neighboring particles at a distance equal to two, the size of the path for this
process is $O(j)$. With this purpose, write $f(\eta^{x,y})-f(\eta)$ as a telescopic sum, use the elementary inequality $(x+y)^2\leq{2x^2+2y^2}$
and then the Cauchy-Schwarz inequality to bound last expression by:
\begin{equation}
\frac{Cj}{N|\mathcal{B}_j|}\sum_{x,y\in{\Lambda_{N}}}\sum_{\eta\in{\Sigma_{N,k}}}\sum_{z\in{\Lambda_{N}}}\sum_{l=1}^{j}\eta(z)\eta(z+l)
\sum_{e_{i}}(f(\eta^{{e}_{i}})-f(\eta))^{2}\nu_{N,k}(\eta) \label{exclusionjumps}
\end{equation}
\begin{equation}
+\frac{CN}{N|\mathcal{B}_j|}\sum_{x,y\in{\Lambda_{N}}}\sum_{\eta\in{\Sigma_{N,k}}}\sum_{z\in{\Lambda_{N}}}\sum_{l=1}^{j}\eta(z)\eta(z+l)
\sum_{\tilde{e}_{i}}c(i,i+1,\eta)(f(\eta^{\tilde{e}_{i}})-f(\eta))^{2}\nu_{N,k}(\eta), \label{porousjumps}
\end{equation}
where $e_{i}$ denotes the bonds that we have used when bringing the neighboring particles at a distance equal to two, while
$\tilde{e}_{i}=\{i,i+1\}$ denotes the bonds we have used when performing the remaining part of the path that takes the particle from $x$ to $y$.
Note, that there is a factor $j$ multiplying the first expression which comes from the size of the path for the jumps of the exclusion, while
for the other process the size of the path is of $O(N)$.

First we deal with the jumps that concerns $\mathcal{L}_{P}$. As before,
we bound (\ref{porousjumps}) from above by
\begin{equation*}
\frac{CjN^3}{N|\mathcal{B}_{j}|}\sum_{\eta\in{\Sigma_{N,k}}}\sum_{z\in{\Lambda_{N}}}\eta(z)\sum_{x}c(x,x+1,\eta)(f(\eta^{x,x+1})-f(\eta))^{2}\nu_{N,k}(\eta),
\end{equation*}
and since $\eta\in{\Sigma_{N,k}}$, we obtain the bound:
\begin{equation*}
\frac{Cj\rho}{\rho-1/j}N^2\mathfrak{D}_{P}(f,\nu_{N,k}).
\end{equation*}
Now, we bound (\ref{exclusionjumps}), by
\begin{equation*}
\frac{CjN^2}{N|\mathcal{B}_{j}|}\sum_{\eta\in{\Sigma_{N,k}}}\sum_{z\in{\Lambda_{N}}}\sum_{l=1}^{j}\eta(z)
\sum_{x}(f(\eta^{x,x+1})-f(\eta))^{2}\nu_{N,k}(\eta).
\end{equation*}
Since the number of particles is $k$, we can bound last expression by:
\begin{equation*}
\frac{Cj^2\rho}{(\rho-1/j)}N\mathfrak{D}_{S}(f,\nu_{N,k}).
\end{equation*}

Reorganizing these facts together we obtain that:
\begin{equation*}
\mathfrak{D}_{LR}(f,\nu_{N,k})\leq{\frac{Cj^2\rho}{(\rho-1/j)}N\mathfrak{D}_{S}(f,\nu_{N,k})+\frac{Cj\rho}{\rho-1/j}N^2\mathfrak{D}_{P}(f,\nu_{N,k})}.
\end{equation*}
Optimizing over $j$, (\ref{comparedirichletformsmalldensity}) follows.

\begin{remark}
For sake of simplicity we have presented the spectral gap results only in the one dimensional setting. By a proper modification of the path
arguments and an accurate estimate on the minimal number of  mobile clusters it is possible to obtain for $d>1$ an analogous result as the one
in Proposition \ref{spectralgaplarge} if the density $\rho=k/N$ is such that $k>C(d)(N/3)^d$.
\end{remark}

\begin{remark}
\label{remarkgap}
Taking for instance the density fluctuations field as defined in (\ref{densityfield}) and the reference measure the Bernoulli product measure
$\nu_{\rho}$, by simple computations we obtain that
\begin{equation*}
\textbf{Var}(\mathcal{Y}^{N}_{t}(H),\nu_{\rho})=\rho(1-\rho)||H||_{2}^{2},
\end{equation*}
while the Dirichlet form corresponding to $\mathcal{L}_{P}$ equals to:
\begin{equation*}
\mathfrak{D}_{P}(\mathcal{Y}^{N}_{t}(H),\nu_{\rho})=\frac{1}{N^{2}}\rho^{2}(1-\rho)||H'||_{2}^{2}.
\end{equation*}

So, if we consider $H$, such that $||H||_{2}^{2}=||H'||_{2}^{2}$, then:
\begin{equation*}
\textbf{Var}(\mathcal{Y}^{N}_{t}(H),\nu_{\rho})=\frac{N^2}{\rho} \mathfrak{D}_{P}(\mathcal{Y}^{N}_{t}(H),\nu_{\rho}),
\end{equation*}
which implies that the spectral gap $\lambda_{N}(\mathcal{L}_{P,\Lambda_{N}})\leq{\frac{\rho}{N^{2}}}$. This is in agreement with the bound that
we have obtained in (\ref{comparedirichletformsmalldensity}), when considering the spectral gap with respect to the uniform measure.
\end{remark}

\end{document}